\documentclass[11pt]{amsart} 
\usepackage{amssymb,amsmath,latexsym,enumerate,graphicx}

\setlength{\oddsidemargin}{0.0in}
\setlength{\evensidemargin}{0.0in}
\setlength{\textwidth}{6.5in}

\bibliographystyle{amsplain}

\newtheorem{theorem}{Theorem}[section]
\newtheorem{corollary}[theorem]{Corollary}
\newtheorem{proposition}[theorem]{Proposition}
\newtheorem{lemma}[theorem]{Lemma}

\newtheorem*{definition}{Definition}

\newcommand\saw[1]{\left(\!\left( #1 \right)\!\right)}
\newcommand\fl[1]{\left\lfloor {#1} \right\rfloor} 
\newcommand\fr[1]{\left\{ {#1} \right\}}

\def\Z{\mathbb{Z}}
\def\Q{\mathbb{Q}}
\def\R{\mathbb{R}}
\def\C{\mathbb{C}}

\title{Finite Trigonometric Character Sums Via Discrete Fourier Analysis}

\author{Matthias Beck}
\author{Mary Halloran}
\address{Department of Mathematics\\
         San Francisco State University\\
         San Francisco, CA 94132\\
         U.S.A.}
\email{beck@math.sfsu.edu, mahall@sfsu.edu}

\subjclass[2000]{11L03; 11E41.}
\keywords{Finite trigonometric sums, characters, class numbers, discrete Fourier analysis, convolution.}
\thanks{The authors thank an anonymous referee for many helpful suggestions and corrections.}

\date{3 June 2008}

\begin{document}

\begin{abstract} 
We prove several old and new theorems about finite sums involving characters and trigonometric functions.
These sums can be traced back to theta function identities from Ramanujan's notebooks and were systematically first studied by Berndt and Zaharescu; their proofs involved complex contour integration. We show how to prove most of Berndt--Zaharescu's and some new identities by elementary methods of discrete Fourier Analysis.
\end{abstract}

\maketitle


\section{Introduction}

Curiously looking trigonometric identities such as
\begin{equation}\label{firstexample}
  \frac{ \sin^2 \left( \frac{ 3 \pi }{ 7 } \right) }{ \sin \frac{ 2 \pi }{ 7 } } - \frac{ \sin^2 \left( \frac{ 2 \pi }{ 7 } \right) }{ \sin \frac \pi 7 } + \frac{ \sin^2 \left( \frac \pi 7 \right) }{ \sin \frac{ 3 \pi }{ 7 } } = 0
\end{equation}
first surfaced in \cite{berndtzhang} as corollaries of (nontrivial) theta function identities from Ramanujan's notebooks, and also in \cite{liueisenstein}.
Berndt--Zaharescu initiated in \cite{berndtzaharescu} a systematic study of finite trigonometric identities involving characters, which nicely explained and vastly expanded on all previously known identities of the likes of \eqref{firstexample}. 
Let us give two examples of such identities, one due to Berndt--Zaharescu (which generalizes \eqref{firstexample}) \cite[Theorem 3.1]{berndtzaharescu} and one that we believe is novel.

\begin{theorem}[Berndt--Zaharescu] \label{finalthm}
Suppose $\chi$ is a nonprincipal, real, primitive, odd character modulo $k$, where $k \ge 7$ is odd. Then
\[
  \sum_{ j=1 }^{ k-1 } \chi(j) \frac{ \sin^2 \left( \frac{ \pi j }{ k } \right) }{ \sin \left( \frac{ 4 \pi j }{ k } \right)  }
  = \frac{ 3 \sqrt k }{ 2 } \left( \chi(2) - 1 \right) h(-k)
  = \begin{cases}
      -3 \sqrt k \, h(-k) & \text{ if } k \equiv 3 \bmod 8 , \\
      0 & \text{ if } k \equiv 7 \bmod 8 .
    \end{cases}
\]
\end{theorem}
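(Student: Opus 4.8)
The plan is to strip the expression down to elementary cosecant sums and then evaluate those by discrete Fourier analysis. First I would record the half-angle identity
\[
\frac{\sin^2(\pi j/k)}{\sin(4\pi j/k)} = \frac{1}{2\sin(4\pi j/k)} - \frac{1}{4\sin(2\pi j/k)},
\]
which follows from $\sin^2(\pi j/k) = \tfrac12\bigl(1-\cos(2\pi j/k)\bigr)$ and $\sin(4\pi j/k) = 2\sin(2\pi j/k)\cos(2\pi j/k)$. Writing $C(a) := \sum_{j=1}^{k-1}\chi(j)\csc(2\pi a j/k)$, the sum in question becomes $\tfrac12 C(2) - \tfrac14 C(1)$. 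Because $k$ is odd, $2$ is invertible modulo $k$, so the substitution $j \mapsto 2^{-1}j$ together with $\chi(2^{-1}) = \chi(2)$ (as $\chi$ is real) gives $C(2) = \chi(2)\,C(1)$. Hence the whole sum equals $\tfrac14(2\chi(2)-1)\,C(1)$, and everything reduces to the single cosecant sum $C(1)$.

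The Fourier-analytic heart is the evaluation of $C(1)$. Using that $\chi$ is primitive I would invert the Gauss sum, $\chi(j) = \tau(\chi)^{-1}\sum_{a}\chi(a)\zeta^{aj}$ with $\zeta = e^{2\pi i/k}$, and interchange summation to get $C(1) = \tau(\chi)^{-1}\sum_a \chi(a) D(a)$, where $D(a) = \sum_{j=1}^{k-1}\zeta^{aj}\csc(2\pi j/k)$ is the discrete Fourier transform of the cosecant. Since $\csc(2\pi j/k)$ is odd in $j$, only the imaginary part survives and $D(a) = i\sum_{j=1}^{k-1}\frac{\sin(2\pi a j/k)}{\sin(2\pi j/k)}$. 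The quotient is a Dirichlet kernel, i.e. a sum of $a$ exponentials $\zeta^{mj}$, so summing the geometric series over $j$ collapses $D(a)$ to $i\,R(a)$ with
\[
R(a) = \begin{cases} k-a & a \text{ odd},\\ -a & a \text{ even},\end{cases}
\qquad 1 \le a \le k-1 .
\]
With the classical value $\tau(\chi) = i\sqrt k$ for a real, odd, primitive character, this yields $C(1) = \tfrac{1}{\sqrt k}\sum_{a=1}^{k-1}\chi(a) R(a)$.

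It remains to recognize the resulting linear character sum as a class number. Splitting by parity gives $\sum_a \chi(a) R(a) = k\sum_{a\text{ odd}}\chi(a) - \sum_a\chi(a)\,a$. The second piece is the Dirichlet class number formula $\sum_{a=1}^{k-1}\chi(a)\,a = -k\,h(-k)$. For the first, I use $\sum_a\chi(a)=0$, the even part $\sum_{a\text{ even}}\chi(a) = \chi(2)\sum_{0<b<k/2}\chi(b)$, and the ``first half'' formula $\sum_{0<b<k/2}\chi(b) = (2-\chi(2))h(-k)$, together with $\chi(2)^2=1$, to obtain $\sum_{a\text{ odd}}\chi(a) = -(2\chi(2)-1)h(-k)$. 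Assembling these gives $C(1) = 2\sqrt k\,(1-\chi(2))h(-k)$, and the algebraic identity $(2\chi(2)-1)(1-\chi(2)) = 3(\chi(2)-1)$ delivers the claimed $\tfrac{3\sqrt k}{2}(\chi(2)-1)h(-k)$. The case split then follows from the supplement to quadratic reciprocity, $\chi(2) = \left(\tfrac{-k}{2}\right)$, which is $-1$ for $k\equiv 3\bmod 8$ and $+1$ for $k\equiv 7\bmod 8$.

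The main obstacle is keeping the normalizations exact: the correct sign of the Gauss sum $\tau(\chi) = i\sqrt k$ (not $-i\sqrt k$), and the matching sign conventions in the two class number formulas, are precisely what pin down the overall constant $3/2$ and the sign, so they must be tracked with care. The other delicate point is justifying the collapse of $D(a)$: one must take $a$ as its representative in $\{1,\dots,k-1\}$ so that the parity in $R(a)$ is well defined, and verify that the boundary index $m=0$ in the Dirichlet kernel contributes the residue $k-1$ exactly when $a$ is odd.
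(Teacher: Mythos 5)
Your proof is correct, but it takes a genuinely different route from the paper's. The paper splits the summand via $\csc 2\theta = \tfrac12(\cot\theta+\tan\theta)$ and $\cot 2\theta = \tfrac12(\cot\theta-\tan\theta)$ into three character sums weighted by $\sin^2\cot$, $\sin^2\tan$, and $\sin^2\tan(2\,\cdot)$, and evaluates each by convolution (Theorems \ref{chisin2cot}, \ref{chisin2tan}, \ref{chisin2tan2}), which requires the auxiliary transforms \eqref{chicottransform}, \eqref{sin2transform}, and \eqref{tan2transform}. Your partial-fraction identity
\[
  \frac{ \sin^2 \left( \frac{\pi j}{k} \right) }{ \sin \left( \frac{4\pi j}{k} \right) }
  = \frac12 \csc \left( \frac{4\pi j}{k} \right) - \frac14 \csc \left( \frac{2\pi j}{k} \right)
\]
instead reduces everything to the pure cosecant sums $C(2)$ and $C(1)$; the substitution $j \mapsto 2^{-1}j$, exploiting multiplicativity of $\chi$ (legitimate since $k$ is odd and $\chi(2^{-1})=\chi(2)=\pm 1$), gives $C(2)=\chi(2)\,C(1)$, so the whole theorem collapses onto the single evaluation $C(1)=2\sqrt k\,(1-\chi(2))\,h(-k)$ --- which is exactly the unlabeled corollary following Theorem~\ref{bztan}, there deduced from Theorems \ref{bzcot} and \ref{bztan}. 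Your evaluation of $C(1)$ is also self-contained: Gauss-sum inversion (valid since $\chi$ is real and primitive), the kernel evaluation $\sum_{j=1}^{k-1} \sin\frac{2\pi a j}{k} \csc\frac{2\pi j}{k} = k-a$ ($a$ odd) or $-a$ ($a$ even) --- this is the second identity of the proposition following Proposition~\ref{eisenstein}, which you re-derive by geometric series rather than convolution --- and both formulas of Theorem~\ref{classnumthm}. I checked the constants: $\sum_{a \text{ odd}} \chi(a) = -(2\chi(2)-1)h(-k)$, hence $C(1)=2\sqrt k\,(1-\chi(2))h(-k)$, and $(2\chi(2)-1)(1-\chi(2))=3(\chi(2)-1)$ using $\chi(2)^2=1$, giving $\tfrac{3\sqrt k}{2}(\chi(2)-1)h(-k)$; the case split agrees with the paper's stated values of $\chi(2)$ modulo $8$. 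What your route buys: it is shorter, avoids computing any new Fourier transforms, and exhibits the theorem as an immediate consequence of results the paper already proves. What the paper's route buys: Theorems \ref{chisin2cot}, \ref{chisin2tan}, and \ref{chisin2tan2} emerge as standalone identities of independent interest, which your decomposition never produces. One nitpick: the exponent $m=0$ in your Dirichlet kernel is the \emph{middle} exponent (it arises from $a-1-2m=0$, i.e.\ $m=(a-1)/2$), not a boundary one; but it indeed occurs exactly when $a$ is odd and contributes $k-1$, so your value of $R(a)$ and the rest of the argument are unaffected.
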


\begin{theorem}\label{sinecharthm}
Let $\chi$ be a nonprincipal, real, odd character modulo $k$, let $a$ and $b$ be positive integers such that $a$ is odd and $1 \le ab < k$, and let $x \in \R$. Then
\[
  \sum_{ j=0 }^{ k-1 } \chi(j) \sin^a \left( \frac{ 2 \pi bj }{ k } + x \right) = \frac{ \sqrt k }{ 2^{ a-1} } \sum_{ { n, m \ge 0 } \atop { n+2mb = ab } } (-1)^{ m-(a-1)/2 } \binom a m \cos \left( (a-2m) x \right) \chi(n) \, .
\]
\end{theorem}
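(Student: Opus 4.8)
The plan is to linearize $\sin^a$ into complex exponentials, convert each resulting exponential sum into a Gauss sum, and then fold the exponentials back into cosines by a symmetry argument in the summation index. Writing $\zeta = e^{2\pi i/k}$ and $\theta_j = 2\pi bj/k + x$, I would first apply the binomial theorem to $\sin\theta = (e^{i\theta}-e^{-i\theta})/(2i)$ to obtain
\[
\sin^a\theta_j = \frac{1}{(2i)^a}\sum_{m=0}^a (-1)^m \binom am e^{i(a-2m)x}\,\zeta^{(a-2m)bj}.
\]
Multiplying by $\chi(j)$, summing over $j$, and interchanging the two finite sums reduces the whole statement to evaluating, for each $m$, the inner sum $\sum_{j=0}^{k-1}\chi(j)\,\zeta^{(a-2m)bj}$.

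The substantive input is the Gauss-sum evaluation for a real odd character, $\sum_{j=0}^{k-1}\chi(j)\,\zeta^{nj} = i\sqrt k\,\chi(n)$ for all integers $n$ (with $\chi(n)=0$ when $\gcd(n,k)>1$). This is precisely the discrete-Fourier-analytic assertion that $\chi$ is, up to the constant $i\sqrt k$, a fixed point of the finite Fourier transform, and it is the crux of the argument: the value $i\sqrt k$ rather than $\pm\sqrt k$ or $-i\sqrt k$ is exactly where the hypotheses that $\chi$ is real, odd, and of conductor $k$ (so that $|\tau(\chi)|=\sqrt k$) enter. This is the lemma I would establish or invoke separately. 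Applying it with $n=(a-2m)b$ and simplifying the prefactor via $i/(2i)^a = (-1)^{(a-1)/2}/2^a$ gives
\[
\sum_{j=0}^{k-1}\chi(j)\sin^a\theta_j = \frac{(-1)^{(a-1)/2}\sqrt k}{2^a}\sum_{m=0}^a(-1)^m\binom am e^{i(a-2m)x}\,\chi((a-2m)b).
\]
Because $1\le ab<k$, every argument $(a-2m)b$ arising with $0\le m\le (a-1)/2$ already lies in $\{1,\dots,k-1\}$, so no reduction modulo $k$ is needed there; this is where the hypothesis $ab<k$ is used.

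Finally I would collapse the sum by pairing the index $m$ with $a-m$. Since $a$ is odd there is no middle term, and under $m\mapsto a-m$ one has $\binom am=\binom a{a-m}$, $(-1)^{a-m}=-(-1)^m$, the factor $a-2m$ changes sign, and $\chi(-(a-2m)b)=-\chi((a-2m)b)$ by oddness. These sign changes combine so that the $m$- and $(a-m)$-terms share a common coefficient and add as $e^{i(a-2m)x}+e^{-i(a-2m)x}=2\cos((a-2m)x)$; this halves the number of terms and converts $2^a$ into $2^{a-1}$. Writing $n=(a-2m)b$, equivalently $n+2mb=ab$, and absorbing $(-1)^{(a-1)/2}$ into $(-1)^m$ to produce $(-1)^{m-(a-1)/2}$, yields exactly the claimed identity. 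The main obstacle I anticipate is not the combinatorics but fixing the Gauss-sum constant and then tracking the four competing signs in the pairing step without error; a good sanity check is to confirm the overall $x$-independent sign on a small case such as $a=1$ or $a=3$.
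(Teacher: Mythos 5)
Your proposal is correct and takes essentially the same route as the paper's own proof: both rest on the binomial expansion of $\sin^a$, the Gauss-sum evaluation $\sum_{j=0}^{k-1}\chi(j)\,\omega^{nj}=i\sqrt k\,\chi(n)$ (whose primitivity requirement you rightly flag, and which the paper's proof also relies on via its choice $g(n)=\tfrac{i}{\sqrt k}\chi(n)$), and the pairing $m\leftrightarrow a-m$ exploiting the oddness of $\chi$ to fold exponentials into cosines. The only difference is organizational: the paper channels the interchange of sums through its convolution/Parseval corollary, computing the discrete Fourier coefficients of $\sin^a$ as a function supported on the residues $\pm(a-2m)b$, whereas you interchange the finite sums directly --- the same computation unwound.
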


\noindent
Here $h(-k)$ denotes the \emph{class number} (i.e., the number of equivalent classes of fractional ideals) of the imaginary quadratic field $\Q[\sqrt{-k}]$.
Class numbers make an appearance in many of the theorems that will follow, and so one can think of them as representations of class numbers by trigonometric sums. 

Examining the ingredients of the left-hand sides of Theorems \ref{finalthm} and \ref{sinecharthm} suggests that there ought to be a way of interpreting them as convolutions of finite Fourier series, and this is our goal in this paper: to show that the elementary methods of discrete Fourier Analysis are well suited to prove these and many other theorems on finite sums involving characters and trigonometric functions. (Berndt--Zaharescu used complex contour integration to prove their theorems.)

We will give a quick brush-up on discrete Fourier Analysis and the Fourier transforms of the basic trigonometric functions in Section~\ref{convolutionsection}. In Section~\ref{easytrigsumsection} we show that this setting gives rise to essentially trivial proofs of many old and some new trigonometric identities not involving characters. In Section~\ref{charactersection} we give Fourier proofs of several old and new theorems along the lines of (and including) Theorems \ref{finalthm} and~\ref{sinecharthm}.


\section{Discrete Fourier Transforms and Convolutions}\label{convolutionsection}

We recall a few basic facts from discrete Fourier analysis (see, e.g., \cite{terrasfinitefourier}). Throughout the paper, we fix a positive integer $k$ and let $\omega = e^{ 2 \pi i/k }$.

\begin{theorem}\label{dftransthm}
Suppose $f$ is a periodic function on $\Z$ with period $k$. Then there exist $\hat f (0), \hat f (1), \dots, \hat f (k-1)$ such that
\[
  f(n) = \frac 1 k \sum_{ j=0 }^{ k-1 } \hat f (j) \, \omega^{ jn } .
\]
We can think of the discrete Fourier transform $\hat f$ as another periodic function on $\Z$ with period $k$.
Explicitly, $\hat f$ is given through
\[
  \hat f (n) = \sum_{ j=0 }^{ k-1 } f(j) \, \omega^{ -jn } .
\]
\end{theorem}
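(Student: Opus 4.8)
The plan is to take the explicit formula for $\hat f$ as a \emph{definition} and verify directly that it reconstructs $f$ through the stated inversion formula; the existence claim then follows automatically. Everything hinges on a single orthogonality relation for the $k$-th roots of unity, namely
\[
  \sum_{ j=0 }^{ k-1 } \omega^{ jm } = \begin{cases} k & \text{if } m \equiv 0 \bmod k, \\ 0 & \text{otherwise,} \end{cases}
\]
which I would establish first. This is immediate from the finite geometric series: when $\omega^m = 1$ (that is, $k \mid m$) each summand equals $1$, and when $\omega^m \ne 1$ the sum collapses to $\frac{ \omega^{ km } - 1 }{ \omega^m - 1 } = 0$ because $\omega^k = 1$.

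With this in hand, I would set $\hat f(n) = \sum_{ j=0 }^{ k-1 } f(j)\, \omega^{ -jn }$ and substitute it into the right-hand side of the reconstruction formula. Interchanging the two finite sums gives
\[
  \frac 1 k \sum_{ j=0 }^{ k-1 } \hat f(j)\, \omega^{ jn } = \frac 1 k \sum_{ \ell=0 }^{ k-1 } f(\ell) \sum_{ j=0 }^{ k-1 } \omega^{ j(n-\ell) } .
\]
By the orthogonality relation the inner sum vanishes unless $\ell \equiv n \bmod k$, in which case it equals $k$. Since $\ell$ ranges over a complete residue system and $f$ has period $k$, the only surviving term is the one with $\ell \equiv n$, and it contributes exactly $f(n)$. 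This proves the inversion formula, and thereby exhibits coefficients $\hat f(0), \dots, \hat f(k-1)$ of the required form.

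An alternative, more conceptual route would be to regard the periodic functions of period $k$ as a $k$-dimensional $\C$-vector space and to check that the exponentials $e_j(n) = \omega^{ jn }$, for $j = 0, \dots, k-1$, form an orthogonal basis under the Hermitian inner product $\langle f, g \rangle = \sum_{ n=0 }^{ k-1 } f(n)\, \overline{ g(n) }$; projecting $f$ onto each $e_j$ then reproduces the same formula for $\hat f$. I do not anticipate any genuine obstacle: the sole nontrivial ingredient is the geometric-series identity above, after which both the existence and the explicit description are routine. The one point worth stating carefully is that the reconstruction step uses the periodicity of $f$ to replace an arbitrary $n \in \Z$ by its residue modulo $k$, which is what lets the single matching term of the orthogonality relation recover $f(n)$ exactly.
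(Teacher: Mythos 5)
Your proof is correct and complete: the orthogonality relation for $k$-th roots of unity via the finite geometric series, followed by interchanging the two finite sums and using periodicity of $f$ to identify the single surviving term, is exactly the standard argument. The paper itself offers no proof of this statement --- it is recalled as a basic fact of discrete Fourier analysis with a citation to Terras --- and your argument is precisely what such a reference supplies, so there is nothing to reconcile; the only cosmetic omission is the one-line check that $\hat f$ is itself $k$-periodic, which is immediate from $\omega^{-j(n+k)} = \omega^{-jn}$.
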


\begin{corollary}
$f$ is even/odd if and only if $\hat f$ is even/odd, respectively.
\end{corollary}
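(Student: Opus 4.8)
The plan is to prove the corollary directly from the two explicit formulas in Theorem~\ref{dftransthm}, using the symmetry of the kernel $\omega^{-jn}$. The key observation is that the map $n \mapsto -n$ acts cleanly on the Fourier transform, so I expect the whole argument to reduce to a reindexing of the defining sum. Recall that $f$ is \emph{even} means $f(-n) = f(n)$ for all $n$, and \emph{odd} means $f(-n) = -f(n)$; since $f$ has period $k$, these conditions are well-defined on $\Z/k\Z$.

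First I would compute $\hat f(-n)$ straight from the definition:
\[
  \hat f(-n) = \sum_{j=0}^{k-1} f(j) \, \omega^{jn} .
\]
Next I would reindex this sum by substituting $j \mapsto -j$ (equivalently $j \mapsto k-j$), which is a bijection of the residues modulo $k$; using the periodicity of both $f$ and $\omega$ to keep the summation range as $0$ to $k-1$, I get
\[
  \hat f(-n) = \sum_{j=0}^{k-1} f(-j) \, \omega^{-jn} .
\]
At this point the argument splits by the parity hypothesis: if $f$ is even, then $f(-j) = f(j)$ and the right-hand side becomes exactly $\hat f(n)$, so $\hat f$ is even; if $f$ is odd, then $f(-j) = -f(j)$ and the right-hand side becomes $-\hat f(n)$, so $\hat f$ is odd. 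This establishes the forward implication in both cases.

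For the converse I would invoke the symmetry of the two formulas in Theorem~\ref{dftransthm}: up to the factor $\tfrac1k$ and the sign in the exponent, the inversion formula recovers $f$ from $\hat f$ by a sum of the same shape, so the identical reindexing argument applied to $n \mapsto f(n) = \tfrac1k \sum_j \hat f(j)\,\omega^{jn}$ shows that the parity of $\hat f$ forces the corresponding parity of $f$. Alternatively, one could simply note that $\widehat{\,\hat f\,}(n) = k\, f(-n)$, so applying the forward direction twice yields the equivalence. I do not anticipate a genuine obstacle here; the only point requiring a little care is justifying the reindexing $j \mapsto -j$ while holding the summation window fixed, which is exactly where the $k$-periodicity of $f$ and of $\omega^{j}$ is used.
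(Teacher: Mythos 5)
Your proof is correct: the reindexing $j \mapsto -j$ under $k$-periodicity gives the forward direction, and either the inversion formula or the identity $\widehat{\,\hat f\,}(n) = k\,f(-n)$ gives the converse. The paper states this corollary without proof as standard background (citing Terras), and your argument is exactly the standard one it implicitly relies on, so there is nothing to add.
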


\begin{theorem}\label{convolutionthm}
Let $f$ and $g$ be periodic functions on $\Z$ with period $k$, and suppose $\hat f$ and $\hat g$ are their discrete Fourier transforms.
Then the \emph{convolution} $f * g$ of $f$ and $g$,
\[
  (f*g) (n) := \sum_{ j=0 }^{ k-1 } f(j) \, g(n-j) \, ,
\]
has period $k$ and discrete Fourier transform
\[
  \widehat{ f*g } = \hat f \cdot \hat g \, .
\]
\end{theorem}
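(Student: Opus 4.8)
The plan is to prove the convolution theorem by directly computing the discrete Fourier transform of $f*g$ using the explicit formula for $\hat{\cdot}$ from Theorem~\ref{dftransthm} and then interchanging the order of summation to expose a factorization. First I would write
\[
  \widehat{f*g}(n) = \sum_{m=0}^{k-1} (f*g)(m) \, \omega^{-mn} = \sum_{m=0}^{k-1} \left( \sum_{j=0}^{k-1} f(j) \, g(m-j) \right) \omega^{-mn} \, ,
\]
and then swap the two finite sums, which is harmless since everything is finite. The goal is to reorganize the exponential $\omega^{-mn}$ so that it splits into a factor depending only on $j$ and a factor depending only on the shifted index $m-j$.

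The key algebraic step is the substitution $\ell = m - j$. For fixed $j$, as $m$ ranges over a complete residue system modulo $k$, so does $\ell$, because $g$ has period $k$ and $\omega$ has order $k$; this is where periodicity is essential. After writing $\omega^{-mn} = \omega^{-jn} \, \omega^{-(m-j)n}$ and renaming, the double sum factors as
\[
  \widehat{f*g}(n) = \sum_{j=0}^{k-1} f(j) \, \omega^{-jn} \sum_{\ell=0}^{k-1} g(\ell) \, \omega^{-\ell n} = \hat f(n) \, \hat g(n) \, ,
\]
which is exactly the claimed identity $\widehat{f*g} = \hat f \cdot \hat g$. Establishing that $f*g$ has period $k$ is a quick preliminary: replacing $n$ by $n+k$ in the defining sum leaves each term $g(n+k-j) = g(n-j)$ unchanged by periodicity of $g$, so $(f*g)(n+k) = (f*g)(n)$.

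The main obstacle, such as it is, is purely bookkeeping: one must verify that the shifted summation index $\ell = m-j$ really does traverse a full period as $m$ does, so that the inner sum is genuinely $\hat g(n)$ and independent of $j$. This follows because both $g$ and the character $n \mapsto \omega^{-\ell n}$ are $k$-periodic in $\ell$, so summing $g(\ell)\,\omega^{-\ell n}$ over any complete residue system modulo $k$ gives the same value $\hat g(n)$. Once this invariance is recorded, the factorization is immediate and no further estimates or convergence issues arise, the sums being finite throughout.
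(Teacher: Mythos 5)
Your proof is correct and complete: the periodicity check, the interchange of the two finite sums, and the re-indexing $\ell = m-j$ justified by the joint $k$-periodicity of $g(\ell)\,\omega^{-\ell n}$ are exactly what is needed, with no gaps. Note that the paper itself offers no proof of this statement --- it is recalled as a standard fact of discrete Fourier analysis with a citation to Terras --- and your argument is precisely the standard one found there, so there is nothing to compare beyond confirming that your write-up supplies the omitted details correctly.
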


\begin{corollary}\label{convolutioncor}
Let $f$ and $g$ be periodic functions on $\Z$ with period $k$, and suppose $\hat f$ and $\hat g$ are their discrete Fourier transforms. Then
\[
  \sum_{ m=0 }^{ k-1 } f(m) g(-m) = \frac 1 k \sum_{ j=0 }^{ k-1 } \hat f (j) \, \hat g (j) \, .
\]
\end{corollary}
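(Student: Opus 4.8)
The plan is to prove Corollary~\ref{convolutioncor} as a direct consequence of the convolution theorem (Theorem~\ref{convolutionthm}) together with the inversion formula (Theorem~\ref{dftransthm}), avoiding any further computation with roots of unity. First I would observe that the left-hand side is exactly the convolution $(f*g)$ evaluated at $n=0$: indeed, $(f*g)(0) = \sum_{m=0}^{k-1} f(m)\,g(-m)$, which matches the target sum verbatim. This reduces the problem to computing $(f*g)(0)$ by an independent route and comparing.

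Next I would apply the inversion formula of Theorem~\ref{dftransthm} to the function $f*g$, which is periodic of period $k$ by Theorem~\ref{convolutionthm}. Writing out the inversion at $n=0$ gives
\[
  (f*g)(0) = \frac 1 k \sum_{ j=0 }^{ k-1 } \widehat{f*g}\,(j)\,\omega^{ j\cdot 0 } = \frac 1 k \sum_{ j=0 }^{ k-1 } \widehat{f*g}\,(j) \, ,
\]
since $\omega^{0}=1$. The key step is then to invoke Theorem~\ref{convolutionthm} to replace $\widehat{f*g}$ by the pointwise product $\hat f \cdot \hat g$, yielding
\[
  (f*g)(0) = \frac 1 k \sum_{ j=0 }^{ k-1 } \hat f(j)\,\hat g(j) \, .
\]
Equating the two expressions for $(f*g)(0)$ gives the claimed identity.

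The argument is essentially a one-line chaining of the two preceding results, so there is no substantive obstacle to overcome; the only point demanding care is the bookkeeping at $n=0$. Specifically, I would want to be sure that the evaluation $n=0$ in the convolution genuinely produces $g(-m)$ (rather than, say, $g(m)$) and that the factor $\omega^{j\cdot 0}=1$ collapses the inversion sum cleanly. Because $g$ is periodic with period $k$, the argument $g(-m)$ is well-defined and could equivalently be written as $g(k-m)$, but no such rewriting is needed. Thus the entire proof consists of recognizing the left-hand sum as $(f*g)(0)$, applying Fourier inversion at zero, and substituting the convolution theorem; the main work was already done in establishing Theorems~\ref{dftransthm} and~\ref{convolutionthm}.
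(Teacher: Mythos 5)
Your proof is correct and is exactly the derivation the paper intends: the corollary is stated without proof as an immediate consequence of Theorem~\ref{convolutionthm}, obtained by recognizing the left-hand side as $(f*g)(0)$ and evaluating it via the inversion formula of Theorem~\ref{dftransthm} together with $\widehat{f*g} = \hat f \cdot \hat g$. No gaps; the bookkeeping at $n=0$ is handled properly.
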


\begin{corollary}\label{convolutionoddeven}
Let $f$ and $g$ be periodic functions on $\Z$ with period $k$, and suppose $\hat f$ and $\hat g$ are their discrete Fourier transforms.
If $f$ or $g$ is odd then
\[
  \sum_{ m=0 }^{ k-1 } f(m) \, g(m) = - \frac 1 k \sum_{ j=0 }^{ k-1 } \hat f (j) \, \hat g (j) \, .
\]
If $f$ or $g$ is even then
\[
  \sum_{ m=0 }^{ k-1 } f(m) \, g(m) = \frac 1 k \sum_{ j=0 }^{ k-1 } \hat f (j) \, \hat g (j) \, .
\]
\end{corollary}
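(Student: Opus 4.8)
The plan is to deduce both identities directly from Corollary~\ref{convolutioncor}, whose right-hand side is precisely the quantity $\frac1k\sum_{j=0}^{k-1}\hat f(j)\,\hat g(j)$ that appears here. The only work is to convert the sum $\sum_{m}f(m)\,g(m)$ into the sum $\sum_{m}f(m)\,g(-m)$ that Corollary~\ref{convolutioncor} controls, and the single tool needed is that the reflection $m\mapsto -m$ permutes a complete residue system modulo $k$. Indeed, since $f$ and $g$ have period $k$, the set $\{-m : 0\le m\le k-1\}$ is again a complete residue system, so $\sum_{m=0}^{k-1}F(m)=\sum_{m=0}^{k-1}F(-m)$ for any $k$-periodic function $F$.

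First I would treat the case where $g$ is odd, so that $g(-m)=-g(m)$. Then Corollary~\ref{convolutioncor} gives $\frac1k\sum_{j}\hat f(j)\,\hat g(j)=\sum_{m}f(m)\,g(-m)=-\sum_{m}f(m)\,g(m)$, which is the first claimed identity. If instead it is $f$ that is odd, I first reindex Corollary~\ref{convolutioncor} by $m\mapsto -m$ to write $\frac1k\sum_{j}\hat f(j)\,\hat g(j)=\sum_{m}f(-m)\,g(m)$, and then apply $f(-m)=-f(m)$ to reach the same conclusion. The even case is identical, except that $g(-m)=g(m)$ (or $f(-m)=f(m)$) removes the sign, yielding the second identity.

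The main, and very mild, obstacle is simply to handle the inclusive ``or'' in the hypothesis correctly: only one of $f$ and $g$ need carry the stated parity, and the reindexing argument above is exactly what lets me transfer the parity onto whichever factor possesses it. Note that no appeal to the corollary on even/odd transforms is required, since the statement is phrased purely in terms of $f$ and $g$; that corollary would instead be the route if one wished to read off the parity of the right-hand side.
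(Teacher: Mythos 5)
Your proof is correct and matches the route the paper evidently intends: the paper states Corollary~\ref{convolutionoddeven} without proof, immediately after Corollary~\ref{convolutioncor}, precisely because it follows by the reflection $m \mapsto -m$ (a permutation of residues mod $k$) together with the parity of $f$ or $g$, exactly as you argue. Your handling of the inclusive ``or'' via reindexing onto whichever factor has the stated parity is the right and complete way to close that small gap.
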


As examples of discrete Fourier transforms, we discuss the basic trigonometric functions. It is through their discrete Fourier transforms that we will derive the finite trigonometric identities that we are after.
Let $\fr x = x - \fl x$ denote the \emph{fractional part} of $x$.

\begin{lemma}\label{trigtransforms}
Let $a \in \Z$, $0 < a < k$, $a \ne \frac k 2$.
If
\[
  f(n) =
  \begin{cases}
    \frac 1 2 & \text{ if } k | a+n , \\
    - \frac 1 2 & \text{ if } k | a-n , \\
    0 & \text{ otherwise, }
  \end{cases}
  \qquad \text{ then } \qquad
  \hat f (n) = i \sin \frac{ 2 \pi a n }{ k } \, .
\]
If
\[
  f(n) =
  \begin{cases}
    \frac 1 2 & \text{ if } k | a+n \text{ or } k | a-n , \\
    0 & \text{ otherwise, }
  \end{cases}
  \qquad \text{ then } \qquad
  \hat f (n) = \cos \frac{ 2 \pi a n }{ k }\, .
\]
If $k$ is odd and
\[
  f(n) = \saw{ \frac n k } :=
  \begin{cases}
    0 & \text{ if } k | n , \\
    \fr {\frac n k} - \frac 1 2 & \text{ otherwise, } 
  \end{cases}
  \qquad \text{ then } \qquad
  \hat f(n) =
  \begin{cases}
    0 & \text{ if } k | n , \\
    \frac i 2 \cot \frac{ \pi n }{ k } & \text{ otherwise. } 
  \end{cases}
\]
If $k$ is odd and
\[
  f(n) =
  \begin{cases}
    0 & \text{ if } k | n , \\
    (-1)^{ n \bmod k }  & \text{ otherwise, } 
  \end{cases}
  \qquad \text{ then } \qquad
  \hat f (n) = i \tan \frac{ \pi n }{ k } \, .
\]
Here $n \bmod k$ denotes the least positive residue mod k.
\end{lemma}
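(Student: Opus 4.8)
The plan is to compute each of the four transforms directly from the formula $\hat f(n) = \sum_{j=0}^{k-1} f(j)\,\omega^{-jn}$ supplied by Theorem~\ref{dftransthm}, rather than checking the inversion formula; in every case the right-hand side collapses to a short expression in $\omega^{\pm n}$ that I then rewrite as a trigonometric function via Euler's formula.

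For the first two functions this is immediate because they have tiny support. Each is nonzero only at the residues $j\equiv\pm a\pmod k$, i.e. at $j=a$ and $j=k-a$ in the range $0\le j\le k-1$, and these are distinct precisely because $a\ne\tfrac k2$. Substituting into the definition and using $\omega^{k}=1$ to replace $\omega^{-(k-a)n}$ by $\omega^{an}$, the first sum becomes $\tfrac12(\omega^{an}-\omega^{-an})=i\sin\frac{2\pi an}{k}$ and the second becomes $\tfrac12(\omega^{an}+\omega^{-an})=\cos\frac{2\pi an}{k}$, as claimed.

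For the sawtooth I would write $\hat f(n)=\sum_{j=1}^{k-1}\bigl(\tfrac jk-\tfrac12\bigr)\omega^{-jn}$ and split it in two. Assuming $k\nmid n$, the constant piece $-\tfrac12\sum_{j=1}^{k-1}\omega^{-jn}$ equals $\tfrac12$, since $\sum_{j=0}^{k-1}\omega^{-jn}=0$. For the linear piece I would invoke the standard closed form $\sum_{j=0}^{k-1}jz^{j}=\tfrac{kz^{k}}{z-1}-\tfrac{z(z^{k}-1)}{(z-1)^{2}}$ (obtained by differentiating the finite geometric series) with $z=\omega^{-n}$; because $z^{k}=1$ the second term drops out and this reduces to $\tfrac{k}{\omega^{-n}-1}$, so the linear piece contributes $\tfrac{1}{\omega^{-n}-1}$. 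Adding the two pieces gives $\tfrac{\omega^{-n}+1}{2(\omega^{-n}-1)}$, while the case $k\mid n$ yields $0$ directly. The fourth function is even easier: $\hat f(n)=\sum_{j=1}^{k-1}(-\omega^{-n})^{j}$ is a single geometric progression, and here the oddness of $k$ first enters through $(-\omega^{-n})^{k}=(-1)^{k}=-1$, which gives $\hat f(n)=\tfrac{1-\omega^{-n}}{1+\omega^{-n}}$ (valid even when $k\mid n$, where both sides vanish).

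The one genuinely non-routine step, common to the last two cases, is turning these rational functions of $\omega^{-n}$ into a cotangent and a tangent. I would multiply numerator and denominator by $e^{\pi i n/k}$, so that the binomial $\omega^{-n}+1$ becomes $e^{-\pi i n/k}+e^{\pi i n/k}=2\cos\frac{\pi n}{k}$ and $\omega^{-n}-1$ becomes $e^{-\pi i n/k}-e^{\pi i n/k}=-2i\sin\frac{\pi n}{k}$; the two quotients then simplify to $\tfrac i2\cot\frac{\pi n}{k}$ and $i\tan\frac{\pi n}{k}$ respectively. Finally I would note that $k$ odd guarantees $\frac{\pi n}{k}$ is never an odd multiple of $\tfrac\pi2$, so the tangent in the last formula is always defined, which is exactly the role of that hypothesis.
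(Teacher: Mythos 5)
Your proof is correct. Note that the paper itself states Lemma~\ref{trigtransforms} without proof (it is presented as a collection of basic facts, with a pointer to \cite{terrasfinitefourier}), so there is no authorial argument to compare against; your computation --- evaluating $\hat f(n) = \sum_{j=0}^{k-1} f(j)\,\omega^{-jn}$ directly, using the two-point support for the sine/cosine cases, the differentiated geometric series for the sawtooth, and a single geometric progression with $(-\omega^{-n})^k = -1$ for the alternating-sign function, then converting via multiplication by $e^{\pi i n/k}$ --- is exactly the standard verification the authors leave to the reader, and every step (including the role of $a \ne \tfrac k2$ in separating the support points and of $k$ odd in keeping $1+\omega^{-n}$ nonzero) checks out.
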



\section{Explicit Evaluations of Finite Trigonometric Sums}\label{easytrigsumsection}

By using the results of the previous section, we can obtain numerous identities of finite trigonometric sums, in an essentially trivial way. As an archetype, we prove the following identity, which is the earliest trigonometric sum evaluation that we are aware of, found by Stern in 1861 \cite{stern}.

\begin{proposition}\label{tangentsquared}
If $k$ is an odd positive integer,
\[
  \sum_{ j=1 }^{ k-1 } \tan^2 \left( \frac{ \pi j }{ k } \right) = k^2 - k \, .
\]
\end{proposition}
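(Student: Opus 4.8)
The plan is to read the identity directly off the tangent transform in Lemma~\ref{trigtransforms} together with the Plancherel-type formula of Corollary~\ref{convolutionoddeven}. Let $f$ be the period-$k$ function with $f(n) = (-1)^{n \bmod k}$ for $k \nmid n$ and $f(n) = 0$ for $k \mid n$, so that Lemma~\ref{trigtransforms} gives $\hat f(n) = i \tan \frac{\pi n}{k}$. Since $\tan$ is odd, $\hat f$ is odd, hence so is $f$ by the corollary to Theorem~\ref{dftransthm}; alternatively one checks directly that $(-1)^{(-n) \bmod k} = -(-1)^{n \bmod k}$ because $k$ is odd. This is exactly the hypothesis needed to invoke the odd case of Corollary~\ref{convolutionoddeven}.

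First I would apply Corollary~\ref{convolutionoddeven} with $g = f$, which yields
\[
  \sum_{ m=0 }^{ k-1 } f(m)^2 = - \frac 1 k \sum_{ j=0 }^{ k-1 } \hat f (j)^2 \, .
\]
The left-hand side is immediate: $f(m)^2 = 1$ whenever $k \nmid m$ and $f(m)^2 = 0$ otherwise, so it equals $k-1$. On the right, $\hat f(j)^2 = \left( i \tan \frac{\pi j}{k} \right)^2 = - \tan^2 \frac{\pi j}{k}$, and the $j=0$ term vanishes, so $\sum_{ j=0 }^{ k-1 } \hat f(j)^2 = - \sum_{ j=1 }^{ k-1 } \tan^2 \frac{\pi j}{k}$.

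Substituting both evaluations gives $k-1 = \frac 1 k \sum_{ j=1 }^{ k-1 } \tan^2 \frac{\pi j}{k}$, and multiplying through by $k$ produces the claimed value $k^2 - k$.

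There is essentially no hard step here: the entire content is packaged in the tangent transform of Lemma~\ref{trigtransforms}. The only points requiring care are (i) confirming the parity of $f$ so that the correct (odd) sign in Corollary~\ref{convolutionoddeven} applies, since getting this wrong would flip the sign and yield a nonsensical negative answer, and (ii) the routine bookkeeping of the terms with $n \equiv 0 \bmod k$ on both sides, where $f$ and $\tan$ both vanish. Both are trivial once noticed, which is precisely why this evaluation serves as the archetype for the ``essentially trivial'' method advertised in this section.
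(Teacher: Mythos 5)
Your proof is correct and is essentially identical to the paper's own: the same function $f(n) = (-1)^{n \bmod k}$ (vanishing at multiples of $k$), the same transform $\hat f(n) = i \tan \frac{\pi n}{k}$ from Lemma~\ref{trigtransforms}, and the same application of Corollary~\ref{convolutionoddeven} with $g = f$. The only difference is that you spell out the parity check and the sign bookkeeping, which the paper leaves implicit.
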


We refer to the excellent bibliography of \cite{berndtyeap} for further past appearances of this and all other trigonometric sums in this section.

\begin{proof}
The discrete Fourier transform of 
\[
  f(n) =
  \begin{cases}
    0 & \text{ if } k | n , \\
    (-1)^{ n \bmod k }  & \text{ otherwise, } 
  \end{cases}
\]
is $\hat f(n) = i \tan \frac{ \pi n }{ k }$; hence Corollary \ref{convolutionoddeven} gives
\[
  - \frac 1 k \sum_{ j=1 }^{ k-1 } \tan^2 \left( \frac{ \pi j }{ k } \right) = - \sum_{ j=0 }^{ k-1 } f(j)^2 = - \sum_{ j=1 }^{ k-1 } (-1)^{ 2n } = - (k-1) \, . \qedhere
\]
\end{proof}

Similar identities, which can be proved in a completely analogous fashion, include
\[
  \sum_{ j=1 }^{ k-1 } \cot^2 \left( \frac{ \pi j }{ k } \right) = \frac 1 3 (k-1) (k-2)
\]
and, for odd $k$,
\[
  \sum_{ j=1 }^{ k-1 } \csc^2 \left( \frac{ 2 \pi j }{ k } \right) = \frac 1 3 \left( k^2 - 1 \right) 
  \qquad \text{ and } \qquad
  \sum_{ j=1 }^{ k-1 } \sec^2 \left( \frac{ 2 \pi j }{ k } \right) = k^2 - 1 \, .
\]
One can easily establish these identities with the help of Corollary \ref{convolutionoddeven} and the identities
\[ \csc (2 \theta) = \tfrac 1 2 \left( \tan \theta + \cot \theta \right) \qquad \text{ and } \qquad \sec^2 \theta = 1 + \tan^2 \theta \, . \]
Another application of the Convolution Theorem is the following identity proved by Eisenstein \cite{eisenstein}; the earliest published proof we are aware of is in \cite{stern}.

\begin{proposition}\label{eisenstein}
For $0 < a < k$,
\[
  \sum_{ j=1 }^{ k-1 } \cot \frac{ \pi j }{ k } \sin \frac{ 2 \pi a j }{ k } = k - 2a \, .
\]
\end{proposition}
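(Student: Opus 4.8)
The plan is to read both factors off Lemma~\ref{trigtransforms} as discrete Fourier transforms and then apply the odd case of Corollary~\ref{convolutionoddeven}. Let $s(n) = \saw{n/k}$ be the sawtooth function, whose transform is $\hat s(n) = \frac i2\cot\frac{\pi n}{k}$ (and $0$ when $k\mid n$), and let $g$ be the function
\[
  g(n) = \begin{cases} \frac12 & \text{if } k\mid a+n,\\ -\frac12 & \text{if } k\mid a-n,\\ 0 & \text{otherwise},\end{cases}
\]
whose transform is $\hat g(n) = i\sin\frac{2\pi an}{k}$. Both $s$ and $g$ are odd, so I may invoke Corollary~\ref{convolutionoddeven} with these two functions.

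The point of the setup is that the product of the transforms is, up to a constant, exactly the summand we want: for $k\nmid j$ we have $\hat s(j)\,\hat g(j) = \frac i2\cot\frac{\pi j}{k}\cdot i\sin\frac{2\pi aj}{k} = -\frac12\cot\frac{\pi j}{k}\sin\frac{2\pi aj}{k}$, while the term $j=0$ (and any $k\mid j$) drops out because $\hat s$ vanishes there. Writing $S$ for the sum in the statement, Corollary~\ref{convolutionoddeven} therefore reads
\[
  \sum_{ m=0 }^{ k-1 } s(m)\,g(m) = -\frac1k\sum_{ j=0 }^{ k-1 }\hat s(j)\,\hat g(j) = \frac{1}{2k}\,S ,
\]
so the whole problem reduces to evaluating the elementary sum $\sum_{m=0}^{k-1} s(m)g(m)$ on the ``physical'' side.

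This last sum has at most two nonzero terms, since for $0 \le m \le k-1$ the congruence $k\mid a-m$ forces $m=a$ (giving $g(a)=-\frac12$) and $k\mid a+m$ forces $m=k-a$ (giving $g(k-a)=\frac12$). Plugging in $s(a)=\frac ak-\frac12$ and $s(k-a)=\frac12-\frac ak$ yields $\sum_m s(m)g(m) = \frac{k-2a}{2k}$, whence $S = k-2a$. The only place any care is needed is the bookkeeping that isolates these two surviving terms, together with the degenerate case $a=\frac k2$ (which is excluded from the definition of $g$): there $\sin\frac{2\pi aj}{k}=\sin(\pi j)=0$, so both sides vanish and the identity holds trivially. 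Everything else is a one-line substitution, so I do not expect a genuine obstacle.
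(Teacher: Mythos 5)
Your proof is correct and follows essentially the same route as the paper's: the same two functions $\saw{n/k}$ and the signed half-delta function $g$ from Lemma~\ref{trigtransforms}, the same application of the odd case of Corollary~\ref{convolutionoddeven}, the same two-term evaluation at $m=a$ and $m=k-a$, and the same separate treatment of the degenerate case $a=\frac k2$. No gaps to report.
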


\begin{proof}
If $a = \frac k 2$ then both sides of the identity are zero. Now suppose $a \ne \frac k 2$.
Let $f(n) = \saw{ \frac n k }$ and
\[
  g(n) =
  \begin{cases}
    \frac 1 2 & \text{ if } k | a+n , \\
    - \frac 1 2 & \text{ if } k | a-n , \\
    0 & \text{ otherwise. }
  \end{cases}
\]
Then
\[
  \hat f(n) =
  \begin{cases}
    0 & \text{ if } k | n , \\
    \frac i 2 \cot \frac{ \pi n }{ k } & \text{ otherwise} 
  \end{cases}
  \qquad \text{ and } \qquad
  \hat g (n) = i \sin \frac{ 2 \pi a n }{ k } \, ,
\]
and by Corollary \ref{convolutionoddeven},
\[
  - \frac{ 1 }{ 2k } \sum_{ j=1 }^{ k-1 } \cot \frac{ \pi j }{ k } \sin \frac{ 2 \pi a j }{ k } 
  = - \sum_{ j=0 }^{ k-1 } f(j) \, g(j)
  = - \left( \frac a k - \frac 1 2 \right) \left( - \frac 1 2 \right) - \left( \frac{ k-a }{ k } - \frac 1 2 \right) \left( \frac 1 2 \right) 
  = - \frac{ k - 2a }{ 2k } \, . \qedhere
\]
\end{proof}

Generalizations of Proposition \ref{eisenstein} were constructed by Williams and Zhang \cite{williamszhang}, who allowed arbitrary powers of the cotangent, and Wang \cite{wangtrig}, who replaced the cotangent by a cotangent derivative.

Here we give siblings of Proposition \ref{eisenstein} that have essentially identical proofs in the language of discrete Fourier analysis.

\begin{proposition}
Suppose $k$ is an odd, positive integer and $0<a<k$. Then
\begin{align*}
  \sum_{ j=1 }^{ k-1 } \tan \frac{ \pi j }{ k } \sin \frac{ 2 \pi a j }{ k } &= (-1)^{ a+1 } k \, , \\
  \sum_{ j=1 }^{ k-1 } \sin \frac{ 2 \pi a j }{ k } \csc \frac{ 2 \pi j }{ k } &= 
    \begin{cases}
      k-a & \text{ if $a$ is odd, } \\
      -a & \text{ if $a$ is even, }
    \end{cases} \\
  \sum_{ j=1 }^{ k-1 } \tan \frac{ \pi j }{ k } \csc \frac{ 2 \pi j }{ k } &= \frac{ k^2 - 1 }{ 2 } \, , \\
  \sum_{ j=1 }^{ k-1 } \cot \frac{ \pi j }{ k } \csc \frac{ 2 \pi j }{ k } &= \frac{ k^2 - 1 }{ 6 } \, .
\end{align*}
\end{proposition}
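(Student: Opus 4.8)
The plan is to prove only the first identity by a genuine convolution; the remaining three then reduce to it, together with sums already evaluated in this section, via the half-angle identity $\csc(2\theta) = \tfrac12(\tan\theta + \cot\theta)$ quoted above.

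For the first identity I would take $f$ to be the source function of the tangent, namely $f(n) = (-1)^{n \bmod k}$ for $k \nmid n$ and $f(n) = 0$ for $k \mid n$, so that $\hat f(n) = i\tan\frac{\pi n}{k}$, and let $g$ be the source function of the sine,
\[
  g(n) = \begin{cases} \tfrac12 & \text{if } k \mid a+n, \\ -\tfrac12 & \text{if } k \mid a-n, \\ 0 & \text{otherwise,} \end{cases}
  \qquad \hat g(n) = i\sin\frac{2\pi a n}{k}.
\]
Since the transforms $i\tan\frac{\pi n}{k}$ and $i\sin\frac{2\pi a n}{k}$ are odd functions of $n$, so are $f$ and $g$, and Corollary \ref{convolutionoddeven} gives $\sum_{m=0}^{k-1} f(m)g(m) = -\tfrac1k\sum_{j=0}^{k-1}\hat f(j)\hat g(j)$. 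Because $\hat f(j)\hat g(j) = -\tan\frac{\pi j}{k}\sin\frac{2\pi a j}{k}$, the right-hand side equals $\tfrac1k\sum_{j=1}^{k-1}\tan\frac{\pi j}{k}\sin\frac{2\pi a j}{k}$. The left-hand side is supported only on the residues $m = a$ and $m = k-a$, where $g(a) = -\tfrac12$ and $g(k-a) = \tfrac12$; using that $k$ is odd to simplify $(-1)^{k-a} = -(-1)^a$, both contributions equal $-\tfrac12(-1)^a$, so $\sum_m f(m)g(m) = (-1)^{a+1}$. Multiplying by $k$ yields the first identity.

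For the second identity I would write $\csc\frac{2\pi j}{k} = \tfrac12\bigl(\tan\frac{\pi j}{k}+\cot\frac{\pi j}{k}\bigr)$ and split the sum into $\tfrac12$ times the first identity plus $\tfrac12$ times Proposition \ref{eisenstein}; the stated case distinction on the parity of $a$ then falls out of the factor $(-1)^{a+1}$. The same device handles the last two identities. For the third, $\tan\frac{\pi j}{k}\csc\frac{2\pi j}{k} = \tfrac12\tan^2\frac{\pi j}{k} + \tfrac12$, so the sum equals $\tfrac12$ times Stern's evaluation in Proposition \ref{tangentsquared} plus $\tfrac12(k-1)$. For the fourth, $\cot\frac{\pi j}{k}\csc\frac{2\pi j}{k} = \tfrac12 + \tfrac12\cot^2\frac{\pi j}{k}$, so the sum equals $\tfrac12(k-1)$ plus $\tfrac12$ times the $\cot^2$ evaluation recorded above; combining the two fractions collapses to $\frac{(k-1)(k+1)}{6}$.

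The only real obstacle lies in the first identity: correctly locating the support of $g$ at $m = a$ and $m = k-a$, tracking the overall sign coming from the oddness of $f$ and $g$ in Corollary \ref{convolutionoddeven}, and invoking oddness of $k$ to collapse $(-1)^{k-a}$. Once this bookkeeping is done, the remaining three identities are purely formal consequences of the half-angle identity together with results already in hand.
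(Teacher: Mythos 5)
Your proposal is correct, and it matches the paper's intended approach: the paper omits an explicit proof, saying only that these identities have proofs ``essentially identical'' to that of Proposition~\ref{eisenstein}, i.e.\ convolution via Corollary~\ref{convolutionoddeven} combined with the half-angle identity $\csc(2\theta)=\tfrac12(\tan\theta+\cot\theta)$ already quoted in this section. Your convolution argument for the first identity (with the correct support points $m=a$, $m=k-a$ and the sign from $k$ odd) and your reduction of the remaining three to Proposition~\ref{eisenstein}, Proposition~\ref{tangentsquared}, and the $\cot^2$ evaluation are exactly this plan, carried out correctly.
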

These examples illustrate that the method of using discrete Fourier convolution does not discriminate between different trigonometric functions. On the other hand, a disadvantage compared to other methods is the fact that we cannot easily handle arbitrary powers, which appear, e.g., in identities of Berndt--Yeap \cite{berndtyeap} and Chu--Marini \cite{chumarini}. We can, however, deal with fixed higher powers. As an illustrating example, we prove the following.

\begin{proposition}
If $k$ is an odd positive integer,
\[
  \sum_{ j=0 }^{ k-1 } \tan^4 \left( \frac{ \pi j }{ k } \right) = \frac 1 3 k (k-1) \left( k^2 + k - 3 \right) .
\]
\end{proposition}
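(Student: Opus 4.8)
The plan is to reuse the function $f$ from Lemma~\ref{trigtransforms} whose transform is $\hat f(n) = i\tan\frac{\pi n}{k}$, and to manufacture the fourth power by convolving $f$ with itself. Set $F := f*f$. By the Convolution Theorem (Theorem~\ref{convolutionthm}), $\hat F = \hat f^2$, so $\hat F(n) = -\tan^2\frac{\pi n}{k}$; in particular $\hat F$ is even, hence so is $F$. Applying the even case of Corollary~\ref{convolutionoddeven} with $f=g=F$ gives
\[
  \sum_{m=0}^{k-1} F(m)^2 = \frac 1k \sum_{j=0}^{k-1} \hat F(j)^2 = \frac 1k \sum_{j=0}^{k-1} \tan^4\frac{\pi j}{k},
\]
so the whole problem reduces to evaluating the left-hand side once $F$ is known explicitly.

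Computing $F$ is the crux. Recall $f(0)=0$ and $f(j)=(-1)^j$ for $1\le j\le k-1$. For $1\le n\le k-1$ I would split $F(n)=\sum_{j}f(j)f(n-j)$ into the range $1\le j\le n$ (where $n-j$ is a genuine least positive residue and $f(j)f(n-j)=(-1)^n$) and the range $n<j\le k-1$ (where $n-j<0$, so the residue is $n-j+k$ and, because $k$ is odd, $(-1)^{n-j+k}=-(-1)^{n-j}$, giving $f(j)f(n-j)=-(-1)^n$). The sign flip coming from $(-1)^k=-1$ is the one place that needs care. Counting the terms in each range yields $F(n)=(-1)^n(2n-k)$ for $1\le n\le k-1$, while a short separate computation gives $F(0)=-(k-1)$; as a sanity check, $F(k-n)=F(n)$, consistent with $F$ being even.

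Finally, $\sum_{m=0}^{k-1}F(m)^2=(k-1)^2+\sum_{m=1}^{k-1}(2m-k)^2$, and expanding the square and inserting the standard formulas for $\sum_{m=1}^{k-1} m$ and $\sum_{m=1}^{k-1} m^2$ collapses this to $\frac{(k-1)(k^2+k-3)}{3}$. Multiplying by $k$ then gives $\sum_{j=0}^{k-1}\tan^4\frac{\pi j}{k}=\frac13 k(k-1)(k^2+k-3)$, as claimed. The only genuine obstacle is the convolution evaluation; once the clean closed form $F(n)=(-1)^n(2n-k)$ is in hand, the remaining algebra is routine.
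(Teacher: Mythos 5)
Your proposal is correct and follows essentially the same route as the paper: both convolve the function $f$ (whose transform is $i\tan\frac{\pi n}{k}$) with itself, evaluate the convolution explicitly as $\pm(-1)^n(2n-k)$ for $0<n<k$ and $\mp(k-1)$ at $n=0$, and then apply the Parseval-type Corollary~\ref{convolutionoddeven} to reduce the fourth-power sum to $(k-1)^2+\sum_{m=1}^{k-1}(2m-k)^2$. The only cosmetic difference is that the paper works with $g=-(f*f)$ (so that $\hat g=\tan^2$) and gets $g(0)=k-1$ from Proposition~\ref{tangentsquared}, whereas you work with $F=f*f$ directly and compute $F(0)=-(k-1)$ from the oddness of $f$; the squares agree, so the computations coincide.
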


\begin{proof}
We will use the Convolution Theorem with the function
\[
  g(n) = \frac 1 k \sum_{ j=0 }^{ k-1 } \tan^2 \frac{ \pi j }{ k } \omega^{ jn } .
\]
This function, in turn, is a convolution, namely $g(n) = - \sum_{ m=0 }^{ k-1 } f(n-m) \, f(m)$ where
\[
  f(n) =
  \begin{cases}
    0 & \text{ if } k | n , \\
    (-1)^{ n \bmod k }  & \text{ otherwise. } 
  \end{cases}
\]
By Proposition~\ref{tangentsquared}, $g(0) = k-1$.
Now for $0 < n < k$ we have with $f(0) = 0$
\[
  g(n) = - \sum_{ m=0 }^{ n-1 } f(n-m) \, f(m) - \sum_{ m=n+1 }^{ k-1 } f(n-m) \, f(m) \, .
\]
We have $f(m) = (-1)^m$ for $0<m<k$, but the evaluation of $f(n-m)$ is slightly more subtle:
\[
  f(n-m) = 
  \begin{cases}
  (-1)^{ n-m } & \text{ if } 0 < m < n , \\
  (-1)^{ n-m+1 } & \text{ if } n < m < k .
  \end{cases}
\]
Here the first case is clear since $0<m<n$ implies $0 < n-m < k$.
The second case $n<m<k$ implies $-k < n-m < 0$, and we use the fact that $f$ is an odd function (since tangent is odd).
Thus we obtain for $0 < n < k$
\[
  g(n) 
  = - \sum_{ m=1 }^{ n-1 } (-1)^{ n-m } (-1)^m - \sum_{ m=n+1 }^{ k-1 } (-1)^{ n-m+1 } (-1)^m
  = (-1)^n (k-2n) \, .
\]
In summary, we have
\[
  g(n) =
  \begin{cases}
  k-1 & \text{ if } n|k , \\
  (-1)^{ n \bmod k } \left( k - 2 (n \bmod k) \right) & \text{ otherwise. }
  \end{cases}
\]
Now we can apply Corollary~\ref{convolutionoddeven}:
\[
  \frac 1 k \sum_{ j=0 }^{ k-1 } \tan^4 \left( \frac{ \pi j }{ k } \right)
  = \sum_{ m=0 }^{ k-1 } \left( g(m) \right)^2
  = (k-1)^2 + \sum_{ m=1 }^{ k-1 } (2m-k)^2
  = \frac 1 3 (k-1) \left( k^2 + k - 3 \right) . \qedhere
\]
\end{proof}

As a final application of discrete Fourier convolution in this section, we derive two identities of binomial coefficients using Fourier analysis.
In preparation, we prove an identity for arbitrary sine and cosine powers (for which we do not need to apply Fourier convolution).

\begin{lemma}\label{sinelemma}
Let $a, b$ be positive integers such that $0 < ab < k$, and let $x, y \in \R$. Then
\[
  \sum_{ j=0 }^{ k-1 } \sin^{ 2a } \left( \frac{ b \pi j }{ k } + x \right) 
  = \sum_{ j=0 }^{ k-1 } \cos^{ 2a } \left( \frac{ b \pi j }{ k } + y \right)
  = \frac{ k }{ 2^{ 2a } } \binom{ 2a }{ a } .
\]
\end{lemma}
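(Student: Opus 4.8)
The plan is to expand both powers directly via Euler's formula and the binomial theorem, thereby reducing the outer sum over $j$ to the standard orthogonality relation for roots of unity. Since the argument of each trigonometric function is $\frac{b\pi j}{k}+x$, after raising to the power $2a$ every exponential that appears will carry $j$ only through $\omega^{(m-a)bj}$, where $\omega=e^{2\pi i/k}$, and such powers sum cleanly over a full period.

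For the sine, I would first write $\sin\!\left(\frac{b\pi j}{k}+x\right)=\frac{1}{2i}\bigl(e^{i\theta_j}-e^{-i\theta_j}\bigr)$ with $\theta_j=\frac{b\pi j}{k}+x$, raise this to the $2a$-th power, and expand by the binomial theorem. Collecting the powers of $i$ and of $-1$ (using $(2i)^{2a}=(-1)^a2^{2a}$ and $(-1)^{2a-m}=(-1)^m$), each term of the expansion becomes a constant times $e^{2i(m-a)x}\,\omega^{(m-a)bj}$. Summing over $j=0,\dots,k-1$ and invoking the orthogonality relation
\[
  \sum_{j=0}^{k-1}\omega^{(m-a)bj}=\begin{cases}k & \text{if } k\mid (m-a)b,\\ 0 & \text{otherwise,}\end{cases}
\]
leaves only those indices $m$ for which $k\mid(m-a)b$.

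The decisive step — and the only place the hypothesis $0<ab<k$ is used — is to show that this divisibility forces $m=a$. Since $0\le m\le 2a$ we have $|m-a|\le a$, and because $b\ge 1$ this gives $|(m-a)b|\le ab<k$; hence $k\mid(m-a)b$ is possible only when $(m-a)b=0$, i.e.\ $m=a$. The surviving central term is independent of $x$, since its exponential factor is $e^{0}=1$, and its coefficient works out to $\frac{(-1)^a}{2^{2a}}\cdot(-1)^a\binom{2a}{a}=\frac{1}{2^{2a}}\binom{2a}{a}$, which after multiplication by $k$ yields $\frac{k}{2^{2a}}\binom{2a}{a}$. The cosine computation is entirely analogous, starting from $\cos\theta=\frac12\bigl(e^{i\theta}+e^{-i\theta}\bigr)$; the sign bookkeeping is even simpler, and again only the $m=a$ term survives, giving the same value independently of $y$. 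I do not anticipate any genuine obstacle beyond this bookkeeping: the content of the lemma is precisely that the range restriction $ab<k$ isolates the central binomial term and simultaneously eliminates all dependence on the phase shifts $x$ and $y$.
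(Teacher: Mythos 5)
Your proof is correct and follows essentially the same route as the paper's: Euler's formula, binomial expansion, orthogonality of the $\omega^{(m-a)bj}$ over a full period, and the observation that $0<ab<k$ forces the surviving index to be $m=a$. The only cosmetic difference is that the paper deduces the cosine identity from the sine identity by the substitution $x \mapsto x + \frac{\pi}{2}$, whereas you repeat the expansion; both are fine.
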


\begin{proof}
The cosine identity follows from the sine identity by replacing $x$ with $x + \frac \pi 2$.
The sine identity is obtained through
\begin{align*}
  \sum_{ j=0 }^{ k-1 } \sin^{ 2a } \left( \frac{ b \pi j }{ k } + x \right) \omega^{ nj }
  &= \frac{ 1 }{ (2i)^{ 2a } } \sum_{ j=0 }^{ k-1 } \left( \omega^{ bj/2 } e^{ ix } - \omega^{ - bj/2 } e^{ -ix } \right)^{ 2a } \omega^{ nj } \\
  &= \frac{ (-1)^a }{ 2^{ 2a } } \sum_{ j=0 }^{ k-1 } \sum_{ m=0 }^{ 2a } \binom{ 2a }{ m } \omega^{ (2a-m) bj/2 } e^{ (2a-m) ix } (-1)^m \omega^{ -mbj/2 } e^{ -mix } \omega^{ nj } \\
  &= \frac{ (-1)^a }{ 2^{ 2a } } \sum_{ m=0 }^{ 2a } (-1)^m \binom{ 2a }{ m } e^{ 2ix (a-m) } \sum_{ j=0 }^{ k-1 } \omega^{ bj (a-m) + nj } .
\end{align*}
Now let $n=0$. We have
\[
  \sum_{ j=0 }^{ k-1 } \omega^{ bj (a-m) } =
  \begin{cases}
    k & \text{ if } k | b(a-m) , \\
    0 & \text{ otherwise. }
  \end{cases}
\]
Since $0<ab<k$, $k$ divides $b(a-m)$ only if $m=a$, whence the above sum becomes
\[
  \sum_{ j=0 }^{ k-1 } \sin^{ 2a } \left( \frac{ b \pi j }{ k } + x \right) = \frac{ k }{ 2^{ 2a } } \binom{ 2a }{ a } \, . \qedhere
\]
\end{proof}

\begin{theorem}
For any positive integer $a$,
\[
  \sum_{ m=0 }^{ a } {\binom a m}^2 
  = \binom{ 2a }{ a }
  = \sum_{ m=0 }^{ a } (-1)^m \, 2^{ 2(a-m) } \binom{ 2m }{ m } \binom a m \, .
\]
\end{theorem}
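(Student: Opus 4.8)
The plan is to evaluate the single quantity $\frac1k\sum_{j=0}^{k-1}\cos^{2a}\!\left(\frac{b\pi j}{k}\right)$ in two different ways, once for each identity. In each case Lemma~\ref{sinelemma} supplies the closed form $\frac{1}{2^{2a}}\binom{2a}{a}$ for this average, while a binomial expansion of the integrand, combined with the orthogonality relation $\frac1k\sum_{j=0}^{k-1}\omega^{cj}=1$ if $k\mid c$ and $0$ otherwise, supplies the other side. Since the statement involves no modulus, I would first fix auxiliary integers $b,k$ subject only to $0<ab<k$ (for concreteness $b=1$ and $k=a+1$); these parameters will cancel at the end.

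For the Vandermonde identity $\sum_{m}\binom am^2=\binom{2a}a$, I would write $\cos^{2a}\theta=(\cos^a\theta)^2$ and expand each factor through $\cos^a\theta = 2^{-a}\sum_{p=0}^a\binom ap e^{i(a-2p)\theta}$, so that with $\theta=\frac{b\pi j}{k}$ the product becomes $2^{-2a}\sum_{p,q}\binom ap\binom aq\,\omega^{(a-p-q)bj}$. Averaging over $j$ and invoking orthogonality, the inner sum vanishes unless $k\mid (a-p-q)b$; because $|a-p-q|\le a$ forces $|a-p-q|b\le ab<k$, only the diagonal $p+q=a$ survives, leaving $2^{-2a}\sum_p\binom ap\binom a{a-p}$. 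Equating this with the value $2^{-2a}\binom{2a}a$ from Lemma~\ref{sinelemma} and using $\binom a{a-p}=\binom ap$ gives the identity.

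For the second identity I would instead expand $\cos^{2a}\theta=(1-\sin^2\theta)^a=\sum_{m=0}^a(-1)^m\binom am\sin^{2m}\theta$ and apply Lemma~\ref{sinelemma} termwise: for $1\le m\le a$ the hypothesis $mb\le ab<k$ lets the lemma evaluate $\frac1k\sum_j\sin^{2m}\!\left(\frac{b\pi j}{k}\right)=2^{-2m}\binom{2m}m$, and the $m=0$ term contributes $1=2^{0}\binom00$, in agreement with the same formula. Equating $\sum_m(-1)^m\binom am 2^{-2m}\binom{2m}m$ with $2^{-2a}\binom{2a}a$ and clearing the powers of two then yields $\binom{2a}a=\sum_{m}(-1)^m2^{2(a-m)}\binom{2m}m\binom am$.

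The computations are routine given Lemma~\ref{sinelemma}; the only point requiring care is the cancellation of the off-diagonal terms in the first identity, which is exactly where the bound $0<ab<k$ enters, together with the bookkeeping that confirms the auxiliary $b$ and $k$ disappear and that the boundary term $m=0$ is consistent. I expect no genuine obstacle beyond verifying these few small points.
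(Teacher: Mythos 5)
Your proof is correct. For the second identity it is in fact the paper's own argument verbatim in spirit: the paper also expands $\cos^{2a}=(1-\sin^2)^a$ and applies Lemma~\ref{sinelemma} termwise, and your explicit check of the $m=0$ boundary term (which the lemma, requiring positive exponents, does not cover) is a point the paper glosses over. For the first identity you take a different route from the paper. The paper stays inside its discrete-Fourier framework: it computes the transform $f(n)=\frac 1k\sum_{j}\left(i\sin\frac{2\pi j}{k}\right)^a\omega^{jn}$, then does a support analysis --- since $-\frac k2<a-2m<\frac k2$, each $n$ admits at most one $m$ with $k\mid n+a-2m$, and every $0\le m\le a$ is realized by some $n$ --- so that $f$ takes each value $\frac{(-1)^m}{2^a}\binom am$ exactly once; it then equates $\sum_n f(-n)f(n)$ with $\frac 1k\sum_j\sin^{2a}\left(\frac{2\pi j}{k}\right)$ via the Convolution Theorem and Lemma~\ref{sinelemma}. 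You instead unroll the convolution by hand: writing $\cos^{2a}\theta=(\cos^a\theta)^2$, expanding both factors into exponentials, and letting orthogonality of the $k$-th roots of unity select the diagonal $p+q=a$ (legitimate because $|a-p-q|\,b\le ab<k$). The two computations are the same orthogonality relation at heart, but your version avoids the injectivity/surjectivity bookkeeping of the support of $f$ and is more self-contained, while the paper's version showcases the convolution machinery that is its advertised theme and reuses the transform already computed for Theorem~\ref{sinecharthm}. Your parameter choice $b=1$, $k=a+1$ satisfies $0<ab<k$, and every invocation of Lemma~\ref{sinelemma} (with exponent $2m$, $1\le m\le a$) is valid, so there is no gap.
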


\begin{proof}
Given $a \ge 1$, choose $k$ such that $2a < k$.
A computation very similar to the previous proof (with $b=2$ and $x=0$) gives
\[
  f(n)
  = \frac 1 k \sum_{ j=0 }^{ k-1 } \left( i \sin \frac{ 2 \pi j }{ k } \right)^a \omega^{ jn }
  = \frac{ 1 }{ 2^a } \sum_{ { 0 \le m \le a } \atop { k|n+a-2m } } (-1)^m \binom{ a }{ m } \, .
\]
Note that $f(-n) = (-1)^a f(n)$ and $- \frac k 2 < a-2m < \frac k 2$. Thus given any $n$, there is at most one $m$ such that $k|n+a-2m$. Therefore,
\[
  f(n)
  = \begin{cases}
    \frac{ (-1)^m }{ 2^a } \binom{ a }{ m } & \text{ if $k|n+a-2m$ for some $0 \le m \le a$,} \\
    0 & \text{ otherwise.}
  \end{cases}
\]
We claim that we can find $n$ (between 0 and $k-1$) such that $k|n+a-2m$, for any $0 \le m \le a$. Indeed, if $m < \frac a 2$ then choose $n = k - (a-2m))$, and if $m \ge \frac a 2$ then choose $n = -(a-2m)$. Thus by Lemma~\ref{sinelemma} and the Convolution Theorem,
\[
  \frac{ (-1)^a \binom{ 2a }{ a } }{ 2^{ 2a } } 
  = \frac{ (-1)^a }{ k } \sum_{ j=0 }^{ k-1 } \sin^{ 2a } \left( \frac{ 2 \pi j }{ k } \right) 
  = \sum_{ n=0 }^{ k-1 } f(-n) \, f(n)
  = \frac{ (-1)^a }{ 2^{ 2a } } \sum_{ m=0 }^a {\binom a m}^2 .
\]

The second identity follows with 
\[
  \cos^{ 2a } \left( \frac{ 2 \pi j }{ k } \right) 
  = \left( 1 - \sin^2 \left( \frac{ 2 \pi j }{ k } \right) \right)^a
  = \sum_{ m=0 }^a (-1)^m \binom a m \sin^{ 2m } \left( \frac{ 2 \pi j }{ k } \right) ,
\]
Lemma~\ref{sinelemma}, and the first identity:
\[
  \frac{ k \binom{ 2a }{ a } }{ 2^{ 2a } }
  = \sum_{ j=0 }^{ k-1 } \cos^{ 2a } \left( \frac{ 2 \pi j }{ k } \right)
  = \sum_{ m=0 }^a (-1)^m \binom a m \frac{ k \binom{ 2m }{ m } }{ 2^{ 2m } } \, . \qedhere
\]
\end{proof}


\section{Evaluations of Finite Character Sums}\label{charactersection}

We review a few definitions on characters.
A \emph{character} $\chi$ (mod $k$) is a nonzero map from $\Z_k$ to $\C$ such that $\chi(ab) = \chi(a) \chi(b)$ and $\chi(n) = 0$ when $\gcd (n,k) > 1$.
The \emph{principal character} mod $k$ is 
\[
  \chi_0(n) = \begin{cases}
  1 & \text{ if } \gcd(n,k) = 1, \\
  0 & \text{ otherwise. }
  \end{cases}
\]
One of the most basic properties of nonprincipal characters $\chi$ is the fact that
$
  \sum_{ j=0 }^{ k-1 } \chi(j) = 0
$.
A character $\chi$ (mod $k$) is \emph{induced} by a character $\chi'$ (mod $k'$) if $k'|k$ and $\chi = \chi'$ (where we extend $\chi'$ naturally to $\Z_k$).
A character that is not induced by any other character is \emph{primitive}.
We will use the fact that for a nonprincipal, real, primitive character modulo $k$, where $k$ is odd, 
\[
  \chi(2) = \begin{cases}
  1 & \text{ if } k \equiv \pm 1 \bmod 8 , \\
  -1 & \text{ if } k \equiv \pm 3 \bmod 8 .
  \end{cases}
\]
The reason that class numbers show up in our formulas is the following classic result (see, e.g., \cite[p.~344--346]{borevichshafarevich}):

\begin{theorem}\label{classnumthm}
Let $\chi$ be a nonprincipal, real, primitive, odd character modulo $k$, where $k \ge 7$. Then
\[
  h(-k) 
  = - \frac 1 k \sum_{ j=1 }^{ k-1 } j \, \chi(j)
  = \frac{ 1 }{ 2 - \chi(2) } \sum_{ j=1 }^{ (k-1)/2 } \chi(j) \, .
\]
\end{theorem}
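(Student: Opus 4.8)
The plan is to treat the two equalities quite differently. The second is a purely elementary manipulation of the finite sum, whereas the first is Dirichlet's analytic class number formula, which genuinely relies on the analytic theory of $L$-functions and must be imported rather than extracted from the discrete Fourier machinery of this paper.

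I would dispose of the second equality first. Write $S = \sum_{j=1}^{k-1} j\,\chi(j)$, $S_1 = \sum_{j=1}^{(k-1)/2} j\,\chi(j)$, and $T = \sum_{j=1}^{(k-1)/2}\chi(j)$. Splitting $S$ at the midpoint and substituting $j \mapsto k-j$ in the upper half, the oddness $\chi(k-j) = -\chi(j)$ produces the relation $S = 2S_1 - kT$. For a second relation I would instead split $S$ by the parity of the summation index: the even terms $j = 2b$ contribute $2\chi(2)S_1$ since $\chi(2b) = \chi(2)\chi(b)$, while the odd terms, rewritten through $j \mapsto k-j$ (which turns odd into even because $k$ is odd) together with oddness of $\chi$, contribute $2\chi(2)S_1 - k\chi(2)T$; summing gives $S = 4\chi(2)S_1 - k\chi(2)T$. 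Eliminating $S_1$ between the two relations yields $S\,(1-2\chi(2)) = k\chi(2)T$, and since $\chi$ is real and $k$ is odd we have $\chi(2)^2 = 1$, so multiplying through by $\chi(2)$ recasts this as $-\tfrac1k S = \tfrac{1}{2-\chi(2)}T$, which is precisely the claimed identity.

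For the first equality I would invoke the analytic class number formula $h(-k) = \tfrac{w\sqrt k}{2\pi}L(1,\chi)$. Here the hypotheses earn their keep: for a real primitive odd character modulo $k$ the associated discriminant is $-k$ (so that $-k$ is a fundamental discriminant, forcing $k \equiv 3 \bmod 4$, consistent with Theorem~\ref{finalthm}), and $k \ge 7$ guarantees that the number of roots of unity is $w = 2$. It then remains to evaluate $L(1,\chi)$ for an odd primitive character, for which the standard route is the closed form $L(1,\chi) = \tfrac{i\pi\,\tau(\chi)}{k}\,B_{1,\bar\chi}$ in terms of the Gauss sum $\tau(\chi) = \sum_{a=1}^{k-1}\chi(a)e^{2\pi i a/k}$ and the generalized Bernoulli number $B_{1,\bar\chi} = \tfrac1k\sum_{a=1}^{k-1}\bar\chi(a)\,a$. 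Because $\chi$ is real, odd, and primitive, we have $\bar\chi = \chi$ and $\tau(\chi) = i\sqrt k$; substituting gives $L(1,\chi) = -\tfrac{\pi}{k^{3/2}}\sum_{a=1}^{k-1} a\,\chi(a)$, and feeding this into the class number formula with $w = 2$ collapses to $h(-k) = -\tfrac1k\sum_{j=1}^{k-1} j\,\chi(j)$.

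The main obstacle is entirely the first equality. The half-sum manipulation above is routine, but the analytic class number formula sits outside the reach of the finite convolution tools of Section~\ref{convolutionsection}: it rests on the analytic continuation of the Dedekind zeta function, the residue computation at $s=1$, and the convergence of $L(1,\chi)$, none of which the discrete transforms supply. The remaining delicate inputs are the determination $w = 2$ (exactly why $k \ge 7$ is imposed) and the precise sign of the quadratic Gauss sum $\tau(\chi) = i\sqrt k$ for odd real primitive characters, which is what pins down the minus sign in front of the final sum.
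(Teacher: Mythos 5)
Your proposal is correct, but note that the paper does not prove Theorem~\ref{classnumthm} at all: it is quoted as a classical result with a pointer to Borevich--Shafarevich, consistent with the paper's policy of treating the class number formula (like the Gauss sum evaluation) as known input to its discrete Fourier machinery rather than something to be derived. Your blind proof therefore supplies what the paper outsources, and both halves check out. For the second equality, the two relations $S = 2S_1 - kT$ and $S = 4\chi(2)S_1 - k\chi(2)T$ do follow exactly as you say from the substitution $j \mapsto k-j$ and the parity split (both using $\chi(k-j) = -\chi(j)$ and the oddness of $k$), and eliminating $S_1$ and multiplying by $\chi(2)$, with $\chi(2)^2 = 1$, gives $-S/k = T/(2-\chi(2))$; I verified this numerically for $k=7$ and $k=11$. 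For the first equality, the inputs you invoke --- Dirichlet's formula $h(-k) = \frac{w\sqrt{k}}{2\pi}L(1,\chi)$ with $w=2$ guaranteed by $k \ge 7$, the evaluation $L(1,\chi) = \frac{i\pi\,\tau(\chi)}{k}B_{1,\bar\chi}$ for odd primitive $\chi$, and the Gauss sum value $\tau(\chi) = i\sqrt{k}$ --- combine exactly as claimed to yield $h(-k) = -\frac{1}{k}\sum_{j=1}^{k-1} j\,\chi(j)$. Your assessment of the division of labor is also accurate: the first equality genuinely cannot be extracted from the finite convolution tools of Section~\ref{convolutionsection}, since it rests on analytic facts about $L(1,\chi)$, which is presumably why the authors cite rather than prove it. What your write-up adds over the paper is a self-contained elementary derivation of the second equality and an explicit accounting of where each hypothesis is used ($k \ge 7$ for $w = 2$; odd, real, primitive forcing $k \equiv 3 \bmod 4$; oddness of $k$ for the parity argument).
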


The \emph{Gauss sum} $G(z, \chi)$ is defined for any $z \in \C$ through $G(z,\chi) = \sum_{ j=0 }^{ k-1 } \chi(j) \, \omega^{ jz }$.
Some basic results about Gauss sums are summarized next (see, e.g., \cite{berndtevanswilliams}).

\begin{theorem}
For each integer $n$, $\chi$ is real and primitive if and only if $G(n,\chi) = \chi(n) \, G(1,\chi) =: \chi(n) \, G(\chi)$.
Now let $\chi$ be a real, primitive character modulo $k$. Then
\[
  G(\chi) =
  \begin{cases}
    \sqrt k & \text{ if $\chi$ is even, } \\
    i \sqrt k & \text{ if $\chi$ is odd. }
  \end{cases}
\]
Consequently, for an integer $n$,
\[
  \frac 1 k \sum_{ j=0 }^{ k-1 } \chi(j) \, \omega^{ jn }
  = \frac 1 k G(n,\chi)
  = \begin{cases}
  \frac{ 1 }{ \sqrt k } \, \chi(n) & \text{ if $\chi$ is even, } \\
  \frac{ i }{ \sqrt k } \, \chi(n) & \text{ if $\chi$ is odd. }
  \end{cases}
\]
In other words,
\[
  \widehat \chi (n) = \begin{cases}
  \sqrt k \, \chi(n) & \text{ if $\chi$ is even, } \\
  - i \sqrt k \, \chi(n) & \text{ if $\chi$ is odd. }
  \end{cases}
\]
\end{theorem}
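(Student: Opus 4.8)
The plan is to establish the three assertions in turn --- the separability characterization, the evaluation of $G(\chi)$, and the resulting transform formula --- all resting on a single reindexing trick. First I would prove separability. For $n$ with $\gcd(n,k)=1$, let $\bar n$ be the inverse of $n$ modulo $k$ and reindex the sum defining $G(n,\chi)$ by $j \mapsto \bar n j$; since $\omega^{(\bar n j) n} = \omega^{j}$ and $\chi(\bar n j) = \chi(\bar n)\chi(j) = \overline{\chi(n)}\,\chi(j)$ (using $|\chi(n)|=1$), this gives $G(n,\chi) = \overline{\chi(n)}\,G(\chi)$. When $\chi$ is real, $\overline{\chi(n)}=\chi(n)$, so the clean identity holds for all coprime $n$. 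The exact content of primitivity is that $G(n,\chi)=0$ whenever $\gcd(n,k)>1$; since $\chi(n)=0$ there too, the identity $G(n,\chi)=\chi(n)\,G(\chi)$ extends to every $n$. For the converse I would apply the identity to a coprime $n$: comparing it with $G(n,\chi)=\overline{\chi(n)}\,G(\chi)$ and using $G(\chi)\ne 0$ forces $\chi(n)=\overline{\chi(n)}$, so $\chi$ is real, while the vanishing at non-coprime $n$ is precisely primitivity.

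Next I would pin down $G(\chi)$ up to sign. Computing $|G(\chi)|^2 = G(\chi)\overline{G(\chi)} = \sum_{a,b}\chi(a)\overline{\chi(b)}\,\omega^{a-b}$ and collapsing it by the separability identity together with orthogonality of the roots of unity yields $|G(\chi)|^2 = k$ for primitive $\chi$. To locate the argument I would note $\overline{G(\chi)} = \sum_{j}\chi(j)\,\omega^{-j} = G(-1,\chi) = \chi(-1)\,G(\chi)$ by separability. Hence $G(\chi)$ is real when $\chi$ is even and purely imaginary when $\chi$ is odd, which combined with $|G(\chi)|^2 = k$ narrows the value to $\pm\sqrt k$ or $\pm i\sqrt k$, respectively.

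The main obstacle is exactly the determination of the sign: that the correct choice is $+\sqrt k$ in the even case and $+i\sqrt k$ in the odd case, rather than the negative. This is Gauss's celebrated sign theorem, which the elementary modulus computation above genuinely does not reach. I would not attempt to reproduce it here but simply invoke the classical determination (as the paper does, citing \cite{berndtevanswilliams}).

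Finally the consequences are routine. Dividing the separability identity by $k$ gives the stated value of $\tfrac 1 k G(n,\chi)$ in each parity case. For the transform, $\widehat\chi(n) = \sum_{j}\chi(j)\,\omega^{-jn} = G(-n,\chi) = \chi(-1)\,\chi(n)\,G(\chi)$; substituting $\chi(-1)=1$, $G(\chi)=\sqrt k$ in the even case and $\chi(-1)=-1$, $G(\chi)=i\sqrt k$ in the odd case produces $\sqrt k\,\chi(n)$ and $-i\sqrt k\,\chi(n)$, as claimed.
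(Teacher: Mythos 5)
The paper itself offers no proof of this theorem: it is stated as background, with a pointer to \cite{berndtevanswilliams}, so there is no internal argument to compare yours against. Your sketch follows the standard textbook route --- reindexing $j \mapsto \bar n j$ for separability at $n$ coprime to $k$, separability plus orthogonality of roots of unity for $|G(\chi)|^2 = k$, the conjugation relation $\overline{G(\chi)} = \chi(-1)\,G(\chi)$ to pin down the argument of $G(\chi)$, and the classical sign theorem for the final sign --- and deferring the sign determination to the literature is entirely in the spirit of the paper, which defers the whole statement. Your final derivations are also correct, including the sign flip in $\widehat\chi(n) = G(-n,\chi) = \chi(-1)\chi(n)\,G(\chi) = -i\sqrt k\,\chi(n)$ for odd $\chi$.

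Two points, however, are asserted where an argument is genuinely required. First, you call ``$G(n,\chi)=0$ whenever $\gcd(n,k)>1$'' \emph{the exact content} of primitivity. It is not: primitivity is defined as not being induced by a character of smaller modulus, and the equivalence with this vanishing statement is a theorem whose two directions both need proof. For the forward direction one uses that a primitive $\chi$ satisfies, for every proper divisor $d$ of $k$, the existence of some $a \equiv 1 \pmod d$ with $\gcd(a,k)=1$ and $\chi(a) \ne 1$, which forces sums of $\chi$ over residue classes mod $d$ to vanish; for the converse, if $\chi$ is induced by a primitive $\chi'$ of modulus $d < k$, then $G(k/d,\chi) = \bigl( \varphi(k)/\varphi(d) \bigr) G(\chi') \ne 0$ while $\gcd(k/d,k) > 1$. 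Second, in your converse you invoke $G(\chi) \ne 0$ for a character not yet known to be primitive, and imprimitive characters can have $G(\chi) = 0$; here you must extract nonvanishing from the hypothesis itself (if $G(\chi) = 0$, the assumed identity gives $G(n,\chi) = 0$ for all $n$, hence $\chi \equiv 0$ by Fourier inversion, a contradiction). With these two repairs your outline is a complete and correct proof, modulo the classical sign evaluation that you rightly cite rather than reprove.
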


Now we're ready to apply the machinery of Section \ref{convolutionsection} to prove old and new theorems about finite trigonometric sums involving characters. We start with proving Theorem \ref{sinecharthm}, which says that, for $\chi$ a nonprincipal, real, odd character modulo $k$, $a$ and $b$ positive integers such that $a$ is odd and $1 \le ab < k$, and $x \in \R$, we have
\[
  \sum_{ j=0 }^{ k-1 } \chi(j) \sin^a \left( \frac{ 2 \pi bj }{ k } + x \right) = \frac{ \sqrt k }{ 2^{ a-1} } \sum_{ { n, m \ge 0 } \atop { n+2mb = ab } } (-1)^{ m-(a-1)/2 } \binom a m \cos \left( (a-2m) x \right) \chi(n) \, .
\]

\begin{proof}[Proof of Theorem \ref{sinecharthm}]
Let $f(n) = \frac 1 k \sum_{ j=0 }^{ k-1 } \left( i \sin \left( \frac{ 2 b \pi j }{ k } + x \right) \right)^a \omega^{ nj }$; then
\[
  f(n) 
  = \frac{ 1 }{ 2^a k } \sum_{ m=0 }^{ a } (-1)^m \binom a m e^{ (a-2m) xi } \sum_{ j=0 }^{ k-1 } \omega^{ (n+(a-2m)b)j }
  = \frac{ 1 }{ 2^a } \sum_{ { 0 \le m \le a } \atop { k|n+(a-2m)b } } (-1)^m \binom a m e^{ (a-2m) xi } .
\]
If we let $s_m = (a-2m)b$, then $s_{ a-m } = - s_m$, and we can rewrite the above equation as
\[
  f(n) = \frac{ 1 }{ 2^a } \left( \sum_{ { 0 \le m < a/2 } \atop { k|n+s_m } } (-1)^m \binom a m e^{ (a-2m)xi } - \sum_{ { 0 \le m < a/2 } \atop { k|n-s_m } } (-1)^m \binom a m e^{ -(a-2m)xi } \right) .
\]
Let $g(n) = \frac{ i }{ \sqrt k } \, \chi(n)$, then by Corollary \ref{convolutionoddeven},
\begin{align*}
  &\frac{ i^a }{ k } \sum_{ j=0 }^{ k-1 } \chi(j) \sin^a \left( \frac{ 2 \pi b j }{ k } + x \right) \\
  &\qquad= - \frac{ i }{ 2^a \sqrt k } \sum_{ n=0 }^{ k-1 } \chi(n) \left( \sum_{ { 0 \le m < a/2 } \atop { k|n+s_m } } (-1)^m \binom a m e^{ (a-2m)xi } - \sum_{ { 0 \le m < a/2 } \atop { k|n-s_m } } (-1)^m \binom a m e^{ -(a-2m)xi } \right) \\
  &\qquad= - \frac{ i }{ 2^a \sqrt k } \sum_{ 0 \le m < a/2 } (-1)^m \binom a m \left( \sum_{ { 0 \le n < k } \atop { k|n+s_m } } \chi(n) e^{ (a-2m)xi } - \sum_{ { 0 \le n < k } \atop { k|n-s_m } } \chi(n) e^{ -(a-2m)xi } \right) .
\end{align*}
Since $1 \le ab \le k$ and $0 \le m < \frac a 2$, we have $0 < s_m \le ab < k$. Furthermore, since $0 \le n < k$, we have for all $m$
\[
  0 < n + s_m < 2k \qquad \text{ and } \qquad -k < n - s_m < k \, .
\]
Thus $k|n+s_m$ only when $n = k-s_m$ and $k|n-s_m$ only when $n = s_m$. Since for every $m$ there exists exactly one $n$ such that $n = s_m$ and $\chi(k-s_m) = - \chi(s_m)$, then
\begin{align*}
  \frac{ i^a }{ k } \sum_{ j=0 }^{ k-1 } \chi(j) \sin^a \left( \frac{ 2 \pi b j }{ k } + x \right)
  &= - \frac{ i }{ 2^a \sqrt k } \sum_{ 0 \le m < a/2 } (-1)^m \binom a m \left( \chi(k-s_m) e^{ (a-2m)xi } - \chi(s_m) e^{ (a-2m)xi } \right) \\
  &= \frac{ i }{ 2^a \sqrt k } \sum_{ 0 \le m < a/2 } (-1)^m \binom a m \chi(s_m) \left( e^{ (a-2m)xi } + e^{ -(a-2m)xi } \right) \\
  &= \frac{ i }{ 2^{a-1} \sqrt k } \sum_{ { n, m \ge 0 } \atop { n+2mb = ab } } (-1)^m \binom a m \cos \left( (a-2m) x \right) \chi(n) \, . \qedhere
\end{align*}
\end{proof}

\begin{corollary}
Let $\chi$ be a nonprincipal, real, odd character modulo $k$ and let $b$ be a positive integer not divisible by $k$. Then
\[
  \sum_{ j=0 }^{ k-1 } \chi(j) \sin \frac{ 2 \pi b j }{ k } = \sqrt k \, \chi(b) \, .
\]
\end{corollary}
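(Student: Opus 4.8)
The plan is to read off this statement as the special case $a = 1$, $x = 0$ of Theorem~\ref{sinecharthm}. Before invoking that theorem I must arrange for its hypothesis $1 \le ab < k$ to hold; with $a = 1$ this reads $1 \le b < k$, whereas the corollary permits any positive $b$ with $k \nmid b$. The remedy is a periodicity reduction: both sides of the desired identity are invariant under $b \mapsto b + k$. Indeed, $\sin\frac{2\pi bj}{k}$ has period $k$ in $b$ because replacing $b$ by $b+k$ shifts the argument by $2\pi j$, a multiple of $2\pi$; and $\chi(b)$ has period $k$ since $\chi$ is a character modulo $k$. Hence I may replace $b$ by its least positive residue $b' \in \{1, \dots, k-1\}$ — which is nonzero precisely because $k \nmid b$ — and assume $1 \le b < k$ from the outset.

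With $a = 1$ and $x = 0$ the left-hand side of Theorem~\ref{sinecharthm} is exactly the sum in question, and the right-hand side collapses. The prefactor is $\frac{\sqrt k}{2^{a-1}} = \sqrt k$, the sign exponent is $m - (a-1)/2 = m$, the cosine factor is $\cos\!\big((1-2m)\cdot 0\big) = 1$, and the binomial $\binom 1 m$ restricts $m$ to $\{0, 1\}$. The indexing constraint $n + 2mb = ab = b$ with $n, m \ge 0$ then leaves only $m = 0$, $n = b$, since $m = 1$ would force $n = -b < 0$. The entire sum therefore reduces to the single term $\sqrt k \,(-1)^0 \binom 1 0 \,\chi(b) = \sqrt k\,\chi(b)$, as claimed.

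Because this is a one-term specialization, there is no real obstacle beyond the periodicity bookkeeping in the first paragraph; the only point needing care is confirming that $k \nmid b$ is exactly what keeps the reduced index $b'$ inside the admissible range $1 \le b' < k$ rather than collapsing to $0$. As a sanity check — and an alternative route bypassing Theorem~\ref{sinecharthm} — one can expand $\sin\frac{2\pi bj}{k} = \frac{1}{2i}(\omega^{bj} - \omega^{-bj})$ and recognize the sum as $\frac{1}{2i}\big(G(b,\chi) - G(-b,\chi)\big)$. Using the Gauss-sum identities recorded above, namely $G(n,\chi) = \chi(n)G(\chi)$ together with $\chi(-1) = -1$ (oddness) and $G(\chi) = i\sqrt k$, this telescopes to $\frac{1}{2i}\cdot 2\chi(b)\cdot i\sqrt k = \sqrt k\,\chi(b)$, giving an independent confirmation of the corollary.
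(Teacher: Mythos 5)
Your proof is correct and matches the paper's intended argument exactly: the corollary appears immediately after Theorem~\ref{sinecharthm} with no separate proof, the intended derivation being precisely your specialization $a=1$, $x=0$, and your periodicity reduction is a welcome bit of care for $b \ge k$ that the paper glosses over. One caveat on your closing aside only: the identity $G(n,\chi) = \chi(n)\,G(\chi)$ with $G(\chi) = i\sqrt k$ is stated in the paper only for real \emph{primitive} characters, so that alternative Gauss-sum verification does not cover the nonprimitive characters nominally allowed by the corollary's hypotheses (a gap the paper itself shares, since the proof of Theorem~\ref{sinecharthm} also uses $\widehat\chi(n) = -i\sqrt k\,\chi(n)$).
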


We can use essentially the same proof (or the substitution $x \mapsto x + \frac \pi 2$) to arrive at the identity
\[
  \sum_{ j=0 }^{ k-1 } \chi(j) \cos^a \left( \frac{ 2 \pi bj }{ k } + x \right) = \frac{ \sqrt k }{ 2^{ a-1} } \sum_{ { n, m \ge 0 } \atop { n+2mb = ab } } \binom a m \sin \left( (a-2m) x \right) \chi(n) \, ,
\]
which holds under the same conditions as in Theorem \ref{sinecharthm}.

The next result we will tackle appeared in \cite[Corollary 2.3]{berndtzaharescu}.

\begin{theorem}[Berndt--Zaharescu]\label{bzcot}
Suppose $\chi$ is a nonprincipal, real, primitive, odd character modulo $k$, where $k \ge 7$ is odd. Then
\[
  \sum_{ j=1 }^{ k-1 } \chi(j) \cot \frac{ \pi j }{ k } = 2 \sqrt k \, h(-k) \, .
\]
\end{theorem}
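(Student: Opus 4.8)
The plan is to apply Corollary~\ref{convolutionoddeven} to the sawtooth function and the character $\chi$, in exactly the spirit of the proof of Proposition~\ref{eisenstein}, and then to convert the resulting sum into a class number via Theorem~\ref{classnumthm}. The cotangent on the left-hand side is, up to a constant, the Fourier transform of $\saw{\frac nk}$ by Lemma~\ref{trigtransforms}, and since $\chi$ is odd, real, and primitive, its transform is $\widehat\chi(n) = -i\sqrt k\,\chi(n)$. Both the sawtooth and $\chi$ are odd functions, so Corollary~\ref{convolutionoddeven} is available.

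Concretely, I would set $f(n) = \saw{\frac nk}$ and $g = \chi$. Since $k$ is odd, $\hat f(n) = \frac i2 \cot \frac{\pi n}{k}$ for $k \nmid n$ and $\hat f(0) = 0$, while $\hat g(n) = -i\sqrt k\,\chi(n)$. Because $f$ is odd, Corollary~\ref{convolutionoddeven} gives
\[
  \sum_{m=0}^{k-1} \saw{\tfrac mk}\,\chi(m) = -\frac1k\sum_{j=0}^{k-1}\hat f(j)\,\hat g(j) = -\frac{1}{2\sqrt k}\sum_{j=1}^{k-1}\chi(j)\cot\frac{\pi j}{k},
\]
where the $j=0$ term drops out because $\hat f(0) = 0$, and the constant simplifies via $\frac i2 \cdot (-i\sqrt k) = \frac{\sqrt k}{2}$.

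Next I would evaluate the left-hand side directly. For $0 < m < k$ we have $\saw{\frac mk} = \frac mk - \frac12$, so
\[
  \sum_{m=1}^{k-1}\saw{\tfrac mk}\,\chi(m) = \frac1k\sum_{m=1}^{k-1} m\,\chi(m) - \frac12\sum_{m=1}^{k-1}\chi(m).
\]
The second sum vanishes because $\chi$ is nonprincipal, and by Theorem~\ref{classnumthm} the first sum equals $-h(-k)$. Combining the two displays and multiplying through by $-2\sqrt k$ then yields $\sum_{j=1}^{k-1}\chi(j)\cot\frac{\pi j}{k} = 2\sqrt k\,h(-k)$, as claimed.

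I do not expect a serious obstacle here; the argument is essentially a single application of the convolution corollary. The only points requiring care are bookkeeping ones: confirming that $f$ and $\chi$ are both odd so that Corollary~\ref{convolutionoddeven} genuinely applies, tracking the factor $\frac i2 \cdot (-i\sqrt k)$ correctly, and noting that the $\frac12\sum\chi$ term vanishes so that the class number formula of Theorem~\ref{classnumthm} can be invoked cleanly.
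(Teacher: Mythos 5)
Your proposal is correct and follows essentially the same route as the paper: apply Corollary~\ref{convolutionoddeven} to the sawtooth function and the character, then invoke Theorem~\ref{classnumthm} (the paper merely normalizes differently, taking $g(n) = \frac{i}{\sqrt k}\chi(n)$ so that $\hat g = \chi$, whereas you take $g = \chi$ with $\hat g(n) = -i\sqrt k\,\chi(n)$; the two are equivalent up to a constant factor). All your sign and constant bookkeeping checks out, including the vanishing of $\frac12\sum_m\chi(m)$ for nonprincipal $\chi$.
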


\begin{proof} 
We apply Corollary \ref{convolutionoddeven} with the functions $f(n) = \saw{ \frac n k }$ and $g(n) = \frac{ i }{ \sqrt k } \chi (n)$:
\[
  \frac{ i }{ 2k } \sum_{ j=1 }^{ k-1 } \chi(j) \cot \frac{ \pi j }{ k }
  = - \frac{ i }{ \sqrt k } \sum_{ m=1 }^{ k-1 } \left( \frac m k - \frac 1 2 \right) \chi(m)
  = \frac{ i }{ \sqrt k } \, h(-k) \, ,
\]
by Theorem \ref{classnumthm}.
\end{proof}

The following extension of Theorem \ref{bzcot} seems to have gone unnoticed in \cite{berndtzaharescu}.

\begin{corollary}
Suppose $\chi$ is a nonprincipal, real, primitive, odd character modulo $k$, where $k \ge 7$ is odd, and $\gcd(b,k) = 1$. Then
\[
  \sum_{ j=1 }^{ k-1 } \chi(j) \cot \frac{ \pi jb }{ k } = 2 \sqrt k \, \chi(b) \, h(-k) \, .
\]
\end{corollary}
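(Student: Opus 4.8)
The plan is to reduce this statement directly to Theorem~\ref{bzcot} by the change of variables $j \mapsto jb \bmod k$, so that essentially no new Fourier computation is needed. The key observations are that multiplication by $b$ permutes the nonzero residues modulo $k$ (because $\gcd(b,k)=1$), that the cotangent is $\pi$-periodic, and that $\chi$ is real and multiplicative.

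First I would set $\ell = jb \bmod k$, the least positive residue. As $j$ runs through $1, 2, \dots, k-1$, so does $\ell$, since multiplication by $b$ is a bijection on $\Z_k$ fixing $0$. Writing $jb = \ell + mk$ for an integer $m$ and using the $\pi$-periodicity of the cotangent gives $\cot \frac{\pi jb}{k} = \cot \frac{\pi \ell}{k}$; note that $0 < \ell < k$ guarantees this is well defined. Simultaneously $j \equiv \ell\, b^{-1} \pmod k$, where $b^{-1}$ is the inverse of $b$ modulo $k$, so $\chi(j) = \chi(\ell)\,\chi(b^{-1})$.

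Next I would simplify $\chi(b^{-1})$. Since $\chi$ is real and $\gcd(b,k)=1$, the value $\chi(b)$ is a real root of unity, hence $\chi(b) = \pm 1$, and from $\chi(b)\chi(b^{-1}) = \chi(1) = 1$ we get $\chi(b^{-1}) = \chi(b)$. Substituting everything into the sum yields
\[
  \sum_{j=1}^{k-1} \chi(j)\cot\frac{\pi jb}{k}
  = \chi(b) \sum_{\ell=1}^{k-1} \chi(\ell)\cot\frac{\pi \ell}{k} \, ,
\]
and an application of Theorem~\ref{bzcot} to the right-hand sum finishes the proof.

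Since the argument is a clean reindexing, I do not anticipate a genuine obstacle; the only points demanding care are the bookkeeping of the substitution --- in particular checking that the $\pi$-periodicity correctly absorbs the multiple $mk$ and that the index $\ell$ never hits a pole of the cotangent --- together with the small verification that realness of $\chi$ forces $\chi(b^{-1}) = \chi(b)$.
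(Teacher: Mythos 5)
Your proposal is correct and is essentially the paper's own proof: the paper likewise reduces to Theorem~\ref{bzcot} via the reindexing $j \mapsto jb$ (writing $2\sqrt k\,h(-k) = \sum_j \chi(jb)\cot\frac{\pi jb}{k} = \chi(b)\sum_j \chi(j)\cot\frac{\pi jb}{k}$ and using $\chi(b)=\pm 1$). You simply run the same substitution in the forward direction and spell out the bookkeeping (the $\pi$-periodicity of the cotangent and $\chi(b^{-1})=\chi(b)$) that the paper leaves implicit.
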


\begin{proof}
Since $\gcd(b,k) = 1$, we have
\[
  2 \sqrt k h(-k) = \sum_{ j=1 }^{ k-1 } \chi(j) \cot \frac{ \pi j }{ k } = \sum_{ j=1 }^{ k-1 } \chi(jb) \cot \frac{ \pi jb }{ k } = \chi(b) \sum_{ j=1 }^{ k-1 } \chi(j) \cot \frac{ \pi jb }{ k } \, .
\]
The statement now follows, since $\chi(b) = \pm 1$.
\end{proof}

With an essentially identical proof to that of Theorem \ref{bzcot} we obtain the tangent analog \cite[Corollary 5.2]{berndtzaharescu}.

\begin{theorem}[Berndt--Zaharescu]\label{bztan}
Suppose $\chi$ is a nonprincipal, real, primitive, odd character modulo $k$, where $k \ge 7$ is odd. Then
\[
  \sum_{ j=1 }^{ k-1 } \chi(j) \tan \frac{ \pi j }{ k } 
  = \sqrt k \left( 2 - 4 \chi(2) \right) h(-k)
  = \begin{cases}
  6 \sqrt k \, h(-k) & \text{ if } k \equiv 3 \bmod 8 , \\
  -2 \sqrt k \, h(-k) & \text{ if } k \equiv 7 \bmod 8 .
  \end{cases}
\]
\end{theorem}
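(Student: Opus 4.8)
The plan is to mirror the proof of Theorem~\ref{bzcot}, replacing the sawtooth by the alternating function whose discrete Fourier transform produces the tangent. Concretely, I would apply Corollary~\ref{convolutionoddeven} to the pair
\[
  f(n) = \begin{cases} 0 & \text{if } k \mid n, \\ (-1)^{ n \bmod k } & \text{otherwise,} \end{cases}
  \qquad
  g(n) = \frac{ i }{ \sqrt k } \, \chi(n) .
\]
By Lemma~\ref{trigtransforms} we have $\hat f(n) = i \tan \frac{ \pi n }{ k }$, and since $\chi$ is odd, real and primitive the Gauss sum theorem gives $\hat g(n) = \chi(n)$. Because $f$ is odd (it is odd precisely because $\tan$ is, as already noted in the tangent-power proof), Corollary~\ref{convolutionoddeven} yields
\[
  \frac{ i }{ \sqrt k } \sum_{ m=1 }^{ k-1 } (-1)^m \chi(m)
  = \sum_{ m=0 }^{ k-1 } f(m) \, g(m)
  = - \frac{ 1 }{ k } \sum_{ j=1 }^{ k-1 } i \tan \frac{ \pi j }{ k } \, \chi(j) ,
\]
so that, after clearing the factor $i/\sqrt k$,
\[
  \sum_{ j=1 }^{ k-1 } \chi(j) \tan \frac{ \pi j }{ k } = - \sqrt k \sum_{ m=1 }^{ k-1 } (-1)^m \chi(m) .
\]

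The only genuinely new work, compared with the cotangent case, is evaluating the alternating character sum on the right, and this is where the $\chi(2)$-dependence enters. Here I would split by parity of the residue. Since $k$ is odd, the even residues in $\{1, \dots, k-1\}$ are exactly $\{ 2\ell : 1 \le \ell \le (k-1)/2 \}$, and the basic identity $\sum_{ m=1 }^{ k-1 } \chi(m) = 0$ lets me collapse the odd part into the even part, giving
\[
  \sum_{ m=1 }^{ k-1 } (-1)^m \chi(m)
  = 2 \sum_{ { 1 \le m \le k-1 } \atop { m \text{ even} } } \chi(m)
  = 2 \, \chi(2) \sum_{ \ell=1 }^{ (k-1)/2 } \chi(\ell) .
\]

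To finish, I would invoke Theorem~\ref{classnumthm}, which gives $\sum_{ \ell=1 }^{ (k-1)/2 } \chi(\ell) = (2 - \chi(2)) \, h(-k)$, and use $\chi(2)^2 = 1$ (valid since $\chi$ is real and $\gcd(2,k)=1$) to simplify $2\chi(2)(2-\chi(2)) = 4\chi(2) - 2$. Combining the displays yields
\[
  \sum_{ j=1 }^{ k-1 } \chi(j) \tan \frac{ \pi j }{ k }
  = - \sqrt k \, (4\chi(2) - 2) \, h(-k)
  = \sqrt k \, (2 - 4\chi(2)) \, h(-k) ,
\]
and substituting the tabulated values $\chi(2) = -1$ for $k \equiv 3 \bmod 8$ and $\chi(2) = 1$ for $k \equiv 7 \bmod 8$ produces the two cases. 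I do not anticipate a serious obstacle: the Fourier step is routine once the correct transform pair is selected, and the arithmetic of the character sum is elementary. The one point demanding care is the parity splitting together with the factored-out $\chi(2)$, which is exactly the feature that distinguishes this identity from the cotangent one and explains why the tangent sum, unlike the cotangent sum, depends on $k \bmod 8$.
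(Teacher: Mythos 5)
Your proof is correct and is precisely the argument the paper intends: the paper omits the details, remarking only that the proof is ``essentially identical'' to that of Theorem~\ref{bzcot}, and your proposal carries this out with the expected ingredients --- the alternating-sign/tangent transform pair from Lemma~\ref{trigtransforms}, the Gauss sum evaluation, Corollary~\ref{convolutionoddeven}, and the half-interval class number formula of Theorem~\ref{classnumthm}. The parity-splitting computation $\sum_{m=1}^{k-1}(-1)^m\chi(m) = 2\chi(2)\bigl(2-\chi(2)\bigr)h(-k) = \bigl(4\chi(2)-2\bigr)h(-k)$ checks out, so no gaps remain.
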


Theorems \ref{bzcot} and \ref{bztan} and the identity $\csc 2 \theta = \frac 1 2 \left( \tan \theta + \cot \theta \right)$ immediately yield the following result.

\begin{corollary}
Suppose $\chi$ is a nonprincipal, real, primitive, odd character modulo $k$, where $k \ge 7$ is odd. Then
\[
  \sum_{ j=1 }^{ k-1 } \chi(j) \csc \frac{ 2 \pi j }{ k } 
  = 2 \sqrt k \left( 1 - \chi(2) \right) h(-k)
  = \begin{cases}
  4 \sqrt k \, h(-k) & \text{ if } k \equiv 3 \bmod 8 , \\
  0 & \text{ if } k \equiv 7 \bmod 8 .
  \end{cases}
\]
\end{corollary}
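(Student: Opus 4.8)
The plan is to read this corollary off directly from Theorems \ref{bzcot} and \ref{bztan} by means of the pointwise identity $\csc 2\theta = \tfrac12\left(\tan\theta + \cot\theta\right)$. First I would set $\theta = \pi j/k$, so that $\csc\frac{2\pi j}{k} = \tfrac12\left(\tan\frac{\pi j}{k} + \cot\frac{\pi j}{k}\right)$ for each $j$. This is legitimate for every $j$ with $1 \le j \le k-1$: since $k$ is odd, the argument $j = k/2$ (the only value at which any of the three functions would have a relevant pole) never occurs, so all terms are finite. Multiplying by $\chi(j)$ and summing over $j$ then splits the left-hand side into a cotangent sum and a tangent sum.

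Next I would substitute the two evaluations already in hand. Theorem \ref{bzcot} gives $\sum_{j=1}^{k-1} \chi(j)\cot\frac{\pi j}{k} = 2\sqrt k\, h(-k)$, and Theorem \ref{bztan} gives $\sum_{j=1}^{k-1} \chi(j)\tan\frac{\pi j}{k} = \sqrt k\,(2 - 4\chi(2))\,h(-k)$. Adding these and multiplying by $\tfrac12$ collapses the coefficient to $\tfrac12\sqrt k\, h(-k)\big((2 - 4\chi(2)) + 2\big) = 2\sqrt k\,(1-\chi(2))\,h(-k)$, which is precisely the first displayed equality.

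For the second equality I would invoke the evaluation of $\chi(2)$ in terms of $k \bmod 8$ recorded at the start of this section. The hypotheses constrain $k$ further than they might first appear: a real, primitive, \emph{odd} character exists modulo an odd $k$ only when $-k$ is a fundamental discriminant, i.e.\ when $k \equiv 3 \bmod 4$, so the only admissible residues are $k \equiv 3$ and $k \equiv 7 \bmod 8$. In the first case $\chi(2) = -1$, whence $1 - \chi(2) = 2$ and the sum equals $4\sqrt k\, h(-k)$; in the second case $7 \equiv -1 \bmod 8$ forces $\chi(2) = 1$, so the factor $1 - \chi(2)$ vanishes and the sum is $0$.

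I do not expect a genuine obstacle here, since the result is nothing more than a fixed linear combination of two theorems that are already proved. The only point deserving care is the case analysis: one must observe that the congruence constraint $k \equiv 3 \bmod 4$ rules out $k \equiv 1, 5 \bmod 8$, so that the two displayed cases are genuinely exhaustive rather than merely a selection of examples.
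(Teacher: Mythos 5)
Your proof is correct and is exactly the paper's approach: the paper derives this corollary immediately from Theorems \ref{bzcot} and \ref{bztan} together with the identity $\csc 2\theta = \tfrac12(\tan\theta + \cot\theta)$, and your coefficient computation $\tfrac12\bigl((2-4\chi(2))+2\bigr) = 2(1-\chi(2))$ and the $k \bmod 8$ case analysis match. The only difference is that you spell out the exhaustiveness of the two cases ($k \equiv 3 \bmod 4$), which the paper records separately in Section~\ref{charactersection}.
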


At this point we have all the Fourier series ingredients to prove the following theorem \cite[Corollary 2.2]{berndtzaharescu}; however, its Fourier proof is lengthy, so that we omit the details.

\begin{theorem}[Berndt--Zaharescu]
Suppose $\chi$ is a nonprincipal, real, primitive, odd character modulo $k$, where $k \ge 7$ is odd, and $a$ is an odd positive integer such that $a-3 < 2k$. Then
\[
  \sum_{ j=1 }^{ k-1 } \chi(j) \cot \left( \frac{ \pi j }{ k } \right) \cos^{ a-1 } \left( \frac{ \pi j }{ k } \right) = 2 \sqrt k \left( h(-k) - 2^{ 1-a } \!\!\!\!\!\!\!\! \sum_{ { n,m,s \ge 0 } \atop { n+2m+s = (a-1)/2 } } \!\!\!\!\!\!\!\! \chi(n) \binom{ a+1 }{ s } \right) .
\]
\end{theorem}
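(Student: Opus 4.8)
The plan is to reduce the weighted product to a combination of the two character sums we have already evaluated, namely $\sum_{j}\chi(j)\cot\frac{\pi j}{k}=2\sqrt k\,h(-k)$ from Theorem~\ref{bzcot} and the sine sum $\sum_{j}\chi(j)\sin\frac{2\pi rj}{k}=\sqrt k\,\chi(r)$ from the corollary following Theorem~\ref{sinecharthm}. Writing $\theta=\frac{\pi j}{k}$ and $t=\tfrac{a-1}{2}$ (an integer, since $a$ is odd), the elementary identity I would establish first is
\[
  \cot\theta\,\cos(2i\theta) = \cot\theta - \sin(2i\theta) - 2\sum_{r=1}^{i-1}\sin(2r\theta) ,
\]
which follows by telescoping: using $\cos(2i\theta)-\cos(2(i-1)\theta)=-2\sin((2i-1)\theta)\sin\theta$ gives $\cot\theta\,\cos(2i\theta)-\cot\theta\,\cos(2(i-1)\theta)=-\sin(2i\theta)-\sin(2(i-1)\theta)$, and summing from $i=1$ against the base value $\cot\theta\,\cos 0=\cot\theta$ yields the claim.

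Next I would expand the even power in cosines of multiple angles, $\cos^{2t}\theta = 2^{-2t}\binom{2t}{t}+2^{1-2t}\sum_{i=1}^{t}\binom{2t}{t-i}\cos(2i\theta)$, and substitute the telescoped identity. The resulting coefficient of $\cot\theta$ collapses to $2^{-2t}\sum_{j=0}^{2t}\binom{2t}{j}=1$, so that $\cot\theta\,\cos^{2t}\theta=\cot\theta-\sum_{r=1}^{t}c_r\sin(2r\theta)$ with $c_r=2^{1-2t}\bigl(2\sum_{j=0}^{t-r}\binom{2t}{j}-\binom{2t}{t-r}\bigr)$. Multiplying by $\chi(j)$, summing over $j$, and applying Theorem~\ref{bzcot} to the cotangent term and the sine corollary to each $\sin(2r\theta)$ term then gives
\[
  \sum_{j=1}^{k-1}\chi(j)\cot\tfrac{\pi j}{k}\cos^{a-1}\tfrac{\pi j}{k} = 2\sqrt k\,h(-k) - \sqrt k\sum_{r=1}^{t}c_r\,\chi(r) .
\]

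The remaining task is purely combinatorial: I must match $\sum_r c_r\chi(r)$ against the stated triple sum. Since $\chi(0)=0$, the index $n$ there runs effectively over $r=n\ge 1$, and collapsing the $m$-index reduces the claim to the binomial identity
\[
  2\sum_{j=0}^{q}\binom{2t}{j}-\binom{2t}{q} = \sum_{m\ge 0}\binom{2t+2}{q-2m} \qquad (q=t-r) ,
\]
which I would prove by induction on $q$ (or from the generating function $(1+x)^{2t+2}/(1-x^2)$). Here $2t+2=a+1$, so the right-hand side produces exactly $\binom{a+1}{s}$ with $s=q-2m$ subject to $n+2m+s=t=\tfrac{a-1}{2}$, and the prefactor becomes $2^{1-2t}=2^{2-a}=2\cdot 2^{1-a}$, matching the target. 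The main obstacle is this bookkeeping step rather than any analytic difficulty: one must track the partial binomial sums arising from the nested telescoping and verify the reindexing into the symmetric three-index form. Finally, I would check that the hypothesis $a-3<2k$ is precisely what guarantees $r\le t=\tfrac{a-1}{2}\le k$, so that every invoked sine sum has argument $r\le k$; the boundary case $r=k$ contributes nothing since $\chi(k)=0$ and $\sin(2\pi j)=0$, and thus the sine corollary applies termwise with no wrap-around.
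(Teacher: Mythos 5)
Your proof is correct, but it cannot be ``essentially the same as the paper's'' for a simple reason: the paper does not prove this theorem at all. It remarks only that all the Fourier ingredients are in place and that ``its Fourier proof is lengthy,'' omitting the details; the implied route would be to convolve the transform \eqref{chicottransform} of $\chi(n)\cot\left(\frac{\pi n}{k}\right)$ with the (binomially expanded) transform of $\cos^{a-1}\left(\frac{\pi n}{k}\right)$ and to evaluate the resulting sum. Your route is genuinely different and more economical: the telescoping identity $\cot\theta\cos(2i\theta)=\cot\theta-\sin(2i\theta)-2\sum_{r=1}^{i-1}\sin(2r\theta)$, combined with the multiple-angle expansion of $\cos^{2t}\theta$ where $t=\frac{a-1}{2}$, reduces everything to results the paper has already established, namely Theorem~\ref{bzcot} for the cotangent term and the corollary to Theorem~\ref{sinecharthm} for each $\sin\left(\frac{2\pi rj}{k}\right)$ term. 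I checked the steps you label as routine: the coefficient of $\cot\theta$ does collapse to $2^{-2t}\sum_{j=0}^{2t}\binom{2t}{j}=1$; your formula for $c_r$ is right; and the key binomial identity $2\sum_{j=0}^{q}\binom{2t}{j}-\binom{2t}{q}=\sum_{m\ge 0}\binom{2t+2}{q-2m}$ holds, since both sides have generating function $(1+x)^{2t+1}/(1-x)$, so the reindexing into the three-index form with $2t+2=a+1$ and prefactor $2^{1-2t}=2\cdot 2^{1-a}$ closes up exactly as you say. Your patch for $k\mid r$ (both sides of the sine evaluation vanish, since $\chi(r)=0$ and $\sin\left(\frac{2\pi rj}{k}\right)=0$) correctly handles the corollary's hypothesis $k\nmid b$. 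One further observation: because $\chi(r)$ and $\sin\left(\frac{2\pi rj}{k}\right)$ depend only on $r\bmod k$, and your term-by-term binomial matching is valid for every $0\le n\le t$, your argument never really needs $a-3<2k$; that hypothesis only ensures the indices stay below $k$, so your proof actually establishes the identity for all odd $a$, with $\chi(n)$ read periodically. In sum, what each approach buys: yours is a short, complete, elementary derivation from two previously proved identities, filling a gap the paper explicitly leaves open; the Fourier-convolution route would be methodologically uniform with the rest of the paper but, as the authors themselves concede, lengthy.
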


Now we turn to \cite[Theorem 7.1]{berndtzaharescu}.

\begin{theorem}[Berndt--Zaharescu]\label{bzcotcos}
Suppose $\chi$ is a nonprincipal, real, primitive, odd character modulo $k$, where $k \ge 7$ is odd, and $b$ is a positive integer such that $b \le k$. Then
\[
  \sum_{ j=1 }^{ k-1 } \chi(j) \cot \left( \frac{ \pi j }{ k } \right) \cos \left( \frac{ 2 \pi b j }{ k } \right)
  = \sqrt k \left( 2 h(-k) - \chi(b) - 2 \sum_{ n=1 }^{ b-1 } \chi(n) \right) .
\]
\end{theorem}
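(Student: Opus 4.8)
The plan is to realize the product $\cot\frac{\pi j}{k}\cos\frac{2\pi bj}{k}$ as a constant multiple of the discrete Fourier transform of a convolution, and then feed this into Corollary~\ref{convolutionoddeven} exactly along the lines of the proof of Theorem~\ref{bzcot}. Concretely, let $f(n)=\saw{\frac nk}$, so that $\hat f(n)=\frac i2\cot\frac{\pi n}{k}$ for $k\nmid n$ and $\hat f(n)=0$ otherwise, and let $c(n)=\frac12$ when $k\mid b+n$ or $k\mid b-n$ and $c(n)=0$ otherwise, so that $\hat c(n)=\cos\frac{2\pi bn}{k}$ by Lemma~\ref{trigtransforms}. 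By the Convolution Theorem~\ref{convolutionthm} the function $h:=f*c$ has transform $\hat h(n)=\hat f(n)\,\hat c(n)=\frac i2\cot\frac{\pi n}{k}\cos\frac{2\pi bn}{k}$, with $\hat h(n)=0$ when $k\mid n$. Hence $\cot\frac{\pi j}{k}\cos\frac{2\pi bj}{k}=-2i\,\hat h(j)$, and applying Corollary~\ref{convolutionoddeven} with the odd character $\chi$ (whose transform is $\hat\chi=-i\sqrt k\,\chi$) gives
\[
  \sum_{j=1}^{k-1}\chi(j)\cot\frac{\pi j}{k}\cos\frac{2\pi bj}{k}=-2i\sum_{j=0}^{k-1}\chi(j)\,\hat h(j)=-2\sqrt k\sum_{m=0}^{k-1}\chi(m)\,h(m),
\]
where the first step extends the sum harmlessly to $j=0$ since $\chi(0)=0=\hat h(0)$, and the second uses the oddness of $\chi$. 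This reduces the problem to evaluating $\sum_m\chi(m)h(m)$.

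Next I would make $h$ explicit. Reading off $h(m)=\sum_n f(n)c(m-n)$, only the residues $n\equiv m\pm b\pmod k$ contribute, so $h(m)=\frac12\big(\saw{\frac{m+b}{k}}+\saw{\frac{m-b}{k}}\big)$; for odd $k$ and $1\le b\le k-1$ the two residues are distinct, since $2b\not\equiv0\pmod k$, while the boundary case $b=k$ collapses to Theorem~\ref{bzcot} because then $\chi(b)=0$ and $\sum_{n=1}^{k-1}\chi(n)=0$. I would then split each sawtooth into a linear part and a correction, writing $\saw{\frac\ell k}=\frac{\ell\bmod k}{k}-\frac12+\frac12[k\mid\ell]$: the constant $-\frac12$ drops out against $\sum_m\chi(m)=0$; the linear part $\frac1k\sum_m m\,\chi(m)$ produces $-h(-k)$ via Theorem~\ref{classnumthm}; the wrap-around of $(m\pm b)\bmod k$ across the blocks $\{0,\dots,b-1\}$ and its complement produces a partial character sum; and the Iverson correction, nonzero only at the single index $m\equiv\mp b$, produces a $\pm\frac12\chi(b)$.

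Carrying this out for the two sawtooths separately, I expect
\[
  \sum_m\chi(m)\saw{\tfrac{m+b}{k}}=-h(-k)+\sum_{n=1}^{b}\chi(n)-\tfrac12\chi(b),\qquad
  \sum_m\chi(m)\saw{\tfrac{m-b}{k}}=-h(-k)+\sum_{n=1}^{b-1}\chi(n)+\tfrac12\chi(b),
\]
where the fold $\chi(k-n)=-\chi(n)$ (oddness of $\chi$) is what converts the tail sum over $\{k-b,\dots,k-1\}$ into $\sum_{n=1}^{b}\chi(n)$. Averaging and using $\sum_{n=1}^{b}\chi(n)=\sum_{n=1}^{b-1}\chi(n)+\chi(b)$ yields $\sum_m\chi(m)h(m)=-h(-k)+\frac12\chi(b)+\sum_{n=1}^{b-1}\chi(n)$, and multiplying by $-2\sqrt k$ gives precisely $\sqrt k\big(2h(-k)-\chi(b)-2\sum_{n=1}^{b-1}\chi(n)\big)$. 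The main obstacle is the bookkeeping in this last computation: tracking the floor corrections and, above all, the two special indices $m\equiv\pm b$ where $\saw{}$ vanishes rather than equalling $-\frac12$, since it is exactly these two points that peel the isolated $\chi(b)$ term off the block sums $\sum_{n=1}^{b-1}\chi(n)$. Everything else is routine once the reduction $S=-2\sqrt k\sum_m\chi(m)h(m)$ and the explicit form of $h$ are in hand.
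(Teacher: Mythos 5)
Your proof is correct, and it runs on the same discrete-Fourier machinery as the paper's; the difference is how the triple product $\chi \cdot \cot \cdot \cos$ is grouped. The paper first computes the full transform of $\chi(j)\cot\left(\frac{\pi j}{k}\right)$, namely its equation \eqref{chicottransform},
\[
  F(n) = \frac 1k \sum_{j=1}^{k-1}\chi(j)\cot\frac{\pi j}{k}\,\omega^{jn}
  = \frac{1}{\sqrt k}\left( 2h(-k) + \chi(n) - 2\sum_{m=0}^{n}\chi(m) \right) ,
\]
and then pairs $F$ against the cosine spike, so the answer drops out as $\frac 12\left(F(b)+F(k-b)\right)$. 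You instead convolve the sawtooth with the cosine spike first, obtaining $h(m)=\frac 12\left(\saw{\frac{m+b}{k}}+\saw{\frac{m-b}{k}}\right)$, and pair against $\chi$ only at the very end via Corollary~\ref{convolutionoddeven}. The substantive computation is the same in both cases --- partial character sums, $\sum_m m\,\chi(m) = -k\,h(-k)$ from Theorem~\ref{classnumthm}, and oddness of $\chi$ --- so neither route is shorter; your two displayed sawtooth--character sums are, in effect, the paper's ``tedious but straightforward'' derivation of \eqref{chicottransform} evaluated at the two points that matter. What the paper's grouping buys is reusability: \eqref{chicottransform} is quoted again in the proofs of Theorems~\ref{chisin2cot}--\ref{chisin2tan2}. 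What your grouping buys is two small points of rigor: you never need the transform at all arguments, only the single pairing $\sum_m \chi(m)h(m)$; and you explicitly dispose of the boundary case $b=k$ (where Lemma~\ref{trigtransforms} does not apply, since the spike function then has a single spike and its transform is $\frac 12$ rather than $\cos\frac{2\pi bn}{k}$) by reducing it to Theorem~\ref{bzcot} --- a degenerate case the paper's proof, which implicitly assumes $b<k$, passes over silently. One cosmetic remark: you use $h$ both for your convolution $f*c$ and for the class number $h(-k)$; it is clear from context, but renaming one of them would be wise.
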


\begin{proof}[Proof of Theorem~\ref{bzcotcos}]
By applying the Convolution Theorem to the functions
$f(n) = \frac{ i }{ 2k } \sum_{ j=1 }^{ k-1 } \cot \frac{ \pi j }{ k } \omega^{ jn }$
and
$g(n) = \sum_{ j=1 }^{ k-1 } \chi(j) \omega^{ jn }$,
one obtains after a somewhat tedious but straightforward calculation the Fourier transform
\begin{equation}\label{chicottransform}
  F(n) 
  = \frac 1 k \sum_{ j=1 }^{ k-1 } \chi(j) \cot \frac{ \pi j }{ k } \omega^{ jn }
  = \frac{ 1 }{ \sqrt k } \left( 2 h(-k) + \chi(n) - 2 \sum_{ m=0 }^n \chi(m) \right) .
\end{equation}
Now we convolve this function with
\[
  h(n) =
  \begin{cases}
    \frac 1 2 & \text{ if } k | b+n \text{ or } k | b-n , \\
    0 & \text{ otherwise, }
  \end{cases}
\]
the Fourier transform of $\cos \left( \frac{ 2 \pi b n }{ k } \right)$.
The Convolution Theorem gives
\begin{align*}
  &\frac 1 k \sum_{ j=1 }^{ k-1 } \chi(j) \cot \left( \frac{ \pi j }{ k } \right) \cos \left( \frac{ 2 \pi b j }{ k } \right)
   = \sum_{ n=0 }^{ k-1 } F(n) \, h(-n) \\
  &\qquad = \frac{ 1 }{ 2 \sqrt k } \left( 4 h(-k) + \chi(b) + \chi(k-b) - 2 \left( \sum_{ m=0 }^b \chi(m) + \sum_{ m=0 }^{ k-b } \chi(m) \right) \right) \\
  &\qquad = \frac{ 1 }{ \sqrt k } \left( 2 h(-k) - \chi(b) - 2 \sum_{ n=1 }^{ b-1 } \chi(n) \right) . \qedhere
\end{align*}
\end{proof}

There are innumerable siblings of Theorem~\ref{bzcotcos} that one can prove with the same methods, and we will give some of them below. The only difficulty lies in the computation of (more and more involved) Fourier transforms. For example, the following theorem is as easily proved as the previous once one has computed the Fourier transform
\[
  \sum_{ j=1 }^{ k-1 } \chi(j) \cot^2 \left( \frac{ \pi j }{ k } \right) \omega^{ jn } 
  = i \sqrt k \left( 4n \, h(-k) - \chi(n) - 4 \sum_{ m=0 }^n \chi(m) (m-n) \right)
\]
(where $\chi$ satisfies the conditions stated in the theorem).

\begin{theorem}\label{cotsin}
Suppose $\chi$ is a nonprincipal, real, primitive, odd character modulo $k$, where $k \ge 7$ is odd. Then
\[
  \sum_{ j=1 }^{ k-1 } \chi(j) \cot^2 \left( \frac{ \pi j }{ k } \right) \sin \left( \frac{ 2 \pi a j }{ k } \right)
  = \sqrt k \left( 4 a \, h(-k) - \chi(a) - 4 \sum_{ m=0 }^{ a-1 } \chi(m) (a-m) \right) .
\]
\end{theorem}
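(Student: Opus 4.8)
The plan is to reproduce the Fourier proof of Theorem~\ref{bzcotcos} almost verbatim, taking the displayed Fourier transform of $\chi(j)\cot^2(\pi j/k)$ as the single nontrivial input. Concretely, I would set
\[
  F(n) = \frac 1k \sum_{j=1}^{k-1} \chi(j)\cot^2\!\left(\frac{\pi j}{k}\right)\omega^{jn}
       = \frac{i}{\sqrt k}\left(4n\,h(-k) - \chi(n) - 4\sum_{m=0}^n\chi(m)(m-n)\right),
\]
which is $\tfrac 1k$ times the given transform, so that by Fourier inversion $\widehat F(j)=\chi(j)\cot^2(\pi j/k)$. Because $\chi$ is odd while $\cot^2$ is even, the function $n\mapsto\chi(n)\cot^2(\pi n/k)$ is odd, and hence $F$ is an odd function of $n$; this parity is what will let me fold two boundary terms into one at the end.

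Next I would introduce the sine factor via Lemma~\ref{trigtransforms}: the two-point function
\[
  s(n) = \begin{cases} \tfrac12 & k\mid a+n,\\ -\tfrac12 & k\mid a-n,\\ 0 & \text{otherwise,}\end{cases}
\]
has transform $\widehat s(n)=i\sin(2\pi an/k)$. Applying Corollary~\ref{convolutioncor} to the pair $(F,s)$ converts the target into
\[
  \sum_{j=1}^{k-1}\chi(j)\cot^2\!\left(\frac{\pi j}{k}\right)\sin\!\left(\frac{2\pi aj}{k}\right)
  = \frac{k}{i}\sum_{m=0}^{k-1} F(m)\,s(-m).
\]
Since $s$ is supported only on the residues $m\equiv\pm a\pmod k$, the right-hand sum collapses to the two terms $m=a$ and $m=k-a$, giving $\tfrac12\bigl(F(a)-F(k-a)\bigr)$; invoking $F(k-a)=F(-a)=-F(a)$ reduces this to simply $F(a)$.

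The last step is to substitute the explicit value of $F(a)$: the prefactor $k/i$ combines with the $i/\sqrt k$ in $F(a)$ to give $\sqrt k$, and then a short reindexing of the inner sum — using $(m-a)=-(a-m)$ and the fact that the $m=0$ term (as $\chi(0)=0$) and the $m=a$ term vanish — puts it into the form $\sum_{m=0}^{a-1}\chi(m)(a-m)$ appearing in the statement. Within this framework I expect no real difficulty: the only points demanding care are the parity/sign bookkeeping (which of $a,k-a$ carries which half of $s$, and the sign picked up when collapsing $F(k-a)=-F(a)$ onto $F(a)$) and the boundary-term reindexing of the inner sum, where a misplaced sign is easy to make.

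The genuine obstacle, sidestepped here because the excerpt hands us the answer, is the derivation of the transform $F$ itself. Following the remark preceding the theorem, it is produced by applying the Convolution Theorem to the two factors of $\chi(j)\cot^2(\pi j/k)=\bigl(\chi(j)\cot(\pi j/k)\bigr)\cot(\pi j/k)$: one convolves the transform of $\chi(j)\cot(\pi j/k)$ from \eqref{chicottransform} with the sawtooth $\saw{\frac nk}$ of Lemma~\ref{trigtransforms} and then evaluates the arising partial character sums by means of the class-number formula of Theorem~\ref{classnumthm}. That is precisely the ``somewhat tedious but straightforward'' computation the authors allude to, and it — rather than the convolution step above — is where the real work lies.
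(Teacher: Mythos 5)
Your convolution framework is exactly the paper's intended route: the paper's own ``proof'' of this theorem is nothing more than the remark that it follows from the displayed transform in the same way Theorem~\ref{bzcotcos} follows from \eqref{chicottransform}, and your steps (read the quoted formula as defining $F$ with $\hat F(j)=\chi(j)\cot^2(\pi j/k)$, convolve against the two-point function $s$, collapse the sum to $m=a$ and $m=k-a$, use oddness of $F$ to get $\tfrac ki F(a)$) are all sound. The problem is the final step, which you wave through as ``a short reindexing'': it does not produce the statement. The $m=0$ and $m=a$ terms of $\sum_{m=0}^{a}\chi(m)(m-a)$ do vanish, but the flip $(m-a)=-(a-m)$ then turns $-4\sum_{m=0}^{a}\chi(m)(m-a)$ into $+4\sum_{m=0}^{a-1}\chi(m)(a-m)$, so running your argument faithfully on the quoted transform yields
\[
  \sum_{j=1}^{k-1}\chi(j)\cot^2\left(\frac{\pi j}{k}\right)\sin\left(\frac{2\pi aj}{k}\right)
  = \sqrt k\left(4a\,h(-k)-\chi(a)+4\sum_{m=0}^{a-1}\chi(m)(a-m)\right),
\]
which contradicts the theorem instead of proving it.

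The resolution is that the transform displayed in the paper contains a typo, which your proposal inherits precisely because you took it ``as the single nontrivial input'' rather than deriving it. A numerical check settles the signs: for $k=7$, $\chi$ the Legendre symbol, and $a=2$, the left-hand side is $3\sqrt 7\approx 7.94$, in agreement with the theorem's $\sqrt 7\,(8-1-4)$, whereas the $+$ version above gives $11\sqrt 7\approx 29.1$. So the theorem is right and the printed transform is wrong: it should read $(n-m)$ where the paper has $(m-n)$, namely
\[
  \sum_{j=1}^{k-1}\chi(j)\cot^2\left(\frac{\pi j}{k}\right)\omega^{jn}
  = i\sqrt k\left(4n\,h(-k)-\chi(n)-4\sum_{m=0}^{n}\chi(m)(n-m)\right).
\]
With this correction your convolution argument goes through verbatim, and no sign manipulation is needed at the end, since $\sum_{m=0}^{a}\chi(m)(a-m)=\sum_{m=0}^{a-1}\chi(m)(a-m)$ (the $m=a$ term is zero). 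But a complete proof must also establish this corrected transform --- e.g.\ by convolving \eqref{chicottransform} with the cotangent transform of Lemma~\ref{trigtransforms} and evaluating the resulting partial character sums via Theorem~\ref{classnumthm} --- both because that is where the real work lies (as you note yourself) and because, as the sign discrepancy shows, the printed formula cannot be trusted as a black box.
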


Our final goal is a discrete Fourier proof of Berndt--Zaharescu's Theorem~\ref{finalthm}, namely, for $\chi$ a nonprincipal, real, primitive, odd character modulo $k$, where $k \ge 7$ is odd, we have
\[
  \sum_{ j=1 }^{ k-1 } \chi(j) \frac{ \sin^2 \left( \frac{ \pi j }{ k } \right) }{ \sin \left( \frac{ 4 \pi j }{ k } \right)  }
  = \frac{ 3 \sqrt k }{ 2 } \left( \chi(2) - 1 \right) h(-k)
  = \begin{cases}
      - 3 \sqrt k \, h(-k) & \text{ if } k \equiv 3 \bmod 8 , \\
      0 & \text{ if } k \equiv 7 \bmod 8 .
    \end{cases}
\]
Since $\frac{ 1 }{ \sin (2 \theta) } = \frac 1 2 \left( \cot \theta + \tan \theta \right)$
and $\cot (2 \theta) = \frac 1 2 \left( \cot \theta - \tan \theta \right)$, 
\begin{align*}
  \sum_{ j=1 }^{ k-1 } \chi(j) \frac{ \sin^2 \left( \frac{ \pi j }{ k } \right) }{ \sin \left( \frac{ 4 \pi j }{ k } \right)  }
  &= \frac 1 4 \sum_{ j=1 }^{ k-1 } \chi(j) \sin^2 \left( \frac{ \pi j }{ k } \right) \cot \left( \frac{ \pi j }{ k } \right)
    - \frac 1 4 \sum_{ j=1 }^{ k-1 } \chi(j) \sin^2 \left( \frac{ \pi j }{ k } \right) \tan \left( \frac{ \pi j }{ k } \right) \\
  &\qquad + \frac 1 2 \sum_{ j=1 }^{ k-1 } \chi(j) \sin^2 \left( \frac{ \pi j }{ k } \right) \tan \left( \frac{ 2 \pi j }{ k } \right) ,
\end{align*}
and so Theorem~\ref{finalthm} follows from Theorems \ref{chisin2cot}, \ref{chisin2tan}, and \ref{chisin2tan2} below, which we believe to be novel.
Since the proofs are very similar to the ones we have given above, we outline only which functions to convolve in each case.
We need the discrete Fourier transform of $\chi(n) \cot \frac{ \pi n }{ k }$ which we derived in \eqref{chicottransform}, and the following transforms which are easily verified.
\begin{align}
  \frac 1 k \sum_{ j=0 }^{ k-1 } \sin^2 \left( \frac{ \pi j }{ k } \right) \omega^{ jn }
  &= \begin{cases}
       \frac 1 2 & \text{ if } k|n, \\
       - \frac 1 4 & \text{ if } k|(n+1) \text{ or } k|(n-1), \\
       0 & \text{ otherwise, }
     \end{cases} \label{sin2transform} \\
  \frac i k \sum_{ j=0 }^{ k-1 } \tan \frac{ 2 \pi j }{ k } \, \omega^{ jn }
  &= \begin{cases}
       0 & \text{ if } k|n, \\
       1 & \text{ if } n \equiv 0, 1 \bmod k \text{ and } k \nmid n, \\
       -1 & \text{ if } n \equiv 2, 3 \bmod k \text{ and } k \nmid n. \\
     \end{cases} \label{tan2transform}
\end{align}
The last identity is only valid for $k \equiv 3 \bmod 4$, but this suffices for our purposes: for a character modulo $k$ to be nonprincipal, real, primitive, and odd, $k$ has to be congruent to $3 \bmod 4$.

\begin{theorem}\label{chisin2cot}
Suppose $\chi$ is a nonprincipal, real, primitive, odd character modulo $k$, where $k \ge 7$ is odd. Then
\[
  \sum_{ j=1 }^{ k-1 } \chi(j) \sin^2 \left( \frac{ \pi j }{ k } \right) \cot \left( \frac{ \pi j }{ k } \right)
  = \frac 1 2 \sqrt k \, .
\]
\end{theorem}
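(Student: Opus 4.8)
The plan is to notice an elementary simplification of the summand that reduces the whole statement to a character sum already evaluated in this paper. Since $\sin(\pi j/k)\ne 0$ for every $j$ with $1\le j\le k-1$, we may cancel one power of the sine against the cotangent:
\[
  \sin^2\!\left(\frac{\pi j}{k}\right)\cot\!\left(\frac{\pi j}{k}\right) = \sin\!\left(\frac{\pi j}{k}\right)\cos\!\left(\frac{\pi j}{k}\right) = \frac12\sin\!\left(\frac{2\pi j}{k}\right).
\]
Hence the left-hand side is just $\tfrac12\sum_{j=1}^{k-1}\chi(j)\sin(2\pi j/k)$, and the $j=0$ term may be included freely since $\sin 0 = 0$.

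Next I would apply the Corollary to Theorem~\ref{sinecharthm} (the special case $b=1$ of $\sum_{j=0}^{k-1}\chi(j)\sin(2\pi bj/k)=\sqrt k\,\chi(b)$), whose hypotheses --- $\chi$ nonprincipal, real, and odd, with $1$ not divisible by $k$ --- are all satisfied. Because $\chi(1)=1$, this immediately gives $\sum_{j=0}^{k-1}\chi(j)\sin(2\pi j/k)=\sqrt k$, and therefore the desired sum equals $\tfrac12\sqrt k$. In this route there is essentially no obstacle at all: the only insight required is spotting the identity $\sin^2\theta\cot\theta=\tfrac12\sin 2\theta$, and the heavy lifting was done when the Corollary was proved.

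For completeness, and to keep the argument inside the systematic convolution framework advertised in this section, I would alternatively convolve $f(n)=\chi(n)\cot(\pi n/k)$ with $g(n)=\sin^2(\pi n/k)$. Both are even, so the even case of Corollary~\ref{convolutionoddeven} applies and expresses the sum as $\tfrac1k\sum_n\hat f(n)\hat g(n)$. The transform $\hat g$, recorded in \eqref{sin2transform}, is supported only on $n\equiv 0,\pm 1\bmod k$, so only three terms survive; evaluating $\hat f$ there via \eqref{chicottransform} produces a contribution $2\sqrt k\,h(-k)$ from $n\equiv 0$ and contributions involving $\sqrt k\,(2h(-k)-1)$ from $n\equiv\pm 1$. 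The one thing to watch here --- and the only real subtlety of the convolution route --- is that the class-number terms cancel exactly, leaving precisely $\tfrac12\sqrt k$. This cancellation is what makes the slick first approach preferable.
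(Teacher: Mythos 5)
Both of your routes are correct, and your primary route is genuinely different from the paper's. The paper's proof of Theorem~\ref{chisin2cot} is precisely your alternative: convolve the transform \eqref{sin2transform} of $\sin^2\left(\frac{\pi n}{k}\right)$ with the transform \eqref{chicottransform} of $\chi(n)\cot\left(\frac{\pi n}{k}\right)$; only $n \equiv 0, \pm 1 \bmod k$ contribute, the relevant values being $2\sqrt k \, h(-k)$ at $n \equiv 0$ and $\sqrt k \left( 2h(-k) - 1 \right)$ at $n \equiv \pm 1$ (one uses $\chi(-1) = -1$ and $\sum_{m=0}^{k-1} \chi(m) = 0$ to see that the values at $1$ and $k-1$ agree), after which the class-number terms cancel, leaving $\tfrac 1 2 \sqrt k$ --- exactly the computation you sketch, with the right numbers. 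Your preferred route --- writing $\sin^2 \theta \cot \theta = \sin\theta\cos\theta = \tfrac 1 2 \sin 2\theta$, legitimate since $\sin\left(\frac{\pi j}{k}\right) \neq 0$ for $1 \le j \le k-1$, and then invoking the Corollary following Theorem~\ref{sinecharthm} with $b=1$ and $\chi(1)=1$ --- is shorter and more transparent: it bypasses the laborious transform \eqref{chicottransform} entirely, and it explains structurally why this identity, alone among the three companion results, contains no class number: the sum is a Gauss sum in disguise. What the paper's convolution route buys in exchange is uniformity: the same template handles Theorems~\ref{chisin2cot}, \ref{chisin2tan}, and \ref{chisin2tan2}, whereas your trigonometric cancellation is special to the cotangent case --- $\sin^2\theta\tan\theta$ retains a pole and admits no analogous reduction.
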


\begin{proof}
Convolve the discrete Fourier transform of $\sin^2 \left( \frac{ \pi n }{ k } \right)$ with that of $\chi(n) \cot \left( \frac{ \pi n }{ k } \right)$.
\end{proof}

\begin{theorem}\label{chisin2tan}
Suppose $\chi$ is a nonprincipal, real, primitive, odd character modulo $k$, where $k \ge 7$ is odd. Then
\begin{align*}
  \sum_{ j=1 }^{ k-1 } \chi(j) \sin^2 \left( \frac{ \pi j }{ k } \right) \tan \left( \frac{ \pi j }{ k } \right)
  &= \sqrt k \left( - \frac 1 2 + \left( 2 - 4 \, \chi(2) \right) h(-k) \right) \\
  &= \begin{cases}
      \sqrt k \left( - \frac 1 2 + 6 h(-k) \right) & \text{ if } k \equiv 3 \bmod 8 , \\
      \sqrt k \left( - \frac 1 2 - 2 h(-k) \right) & \text{ if } k \equiv 7 \bmod 8 .
    \end{cases}
\end{align*}
\end{theorem}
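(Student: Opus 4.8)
The plan is to follow the convolution recipe used for Theorems~\ref{bzcotcos} and~\ref{chisin2cot}: pair the transform \eqref{sin2transform} of $\sin^2(\pi n/k)$ with the transform of $\chi(n)\tan(\pi n/k)$ through Corollary~\ref{convolutioncor}. Concretely, let $A$ be the function with $\widehat A(j) = \chi(j)\tan(\pi j/k)$, and let $B$ be the function displayed in \eqref{sin2transform}, so that $\widehat B(j) = \sin^2(\pi j/k)$. Corollary~\ref{convolutioncor} then gives
\[
  \frac1k\sum_{j=1}^{k-1}\chi(j)\sin^2\left(\frac{\pi j}{k}\right)\tan\left(\frac{\pi j}{k}\right) = \sum_{n=0}^{k-1} A(n)\,B(-n).
\]
The crucial simplification is that $B$ is supported only on $n\equiv 0,\pm1 \pmod k$, with $B(0)=\tfrac12$ and $B(\pm1)=-\tfrac14$. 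Since $B$ is even and (because $\chi\tan$ is a product of two odd functions) so is $A$, the right-hand side collapses to $\tfrac12 A(0) - \tfrac14 A(1) - \tfrac14 A(k-1) = \tfrac12\bigl(A(0)-A(1)\bigr)$. So the whole problem reduces to knowing the transform of $\chi\tan$ at the three points $0,\pm1$.

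Next I would compute $A$, but without grinding out the full transform (the tangent analogue of \eqref{chicottransform}). Instead I would run the convolution theorem in reverse: since $\widehat\chi(j) = -i\sqrt k\,\chi(j)$, and the function $t$ of Lemma~\ref{trigtransforms} (with $t(n)=0$ for $k\mid n$ and $t(n)=(-1)^{n\bmod k}$ otherwise) satisfies $\widehat t(j) = i\tan(\pi j/k)$, we get $\chi(j)\tan(\pi j/k) = \tfrac1{\sqrt k}\,\widehat\chi(j)\,\widehat t(j) = \tfrac1{\sqrt k}\,\widehat{\chi * t}(j)$, hence $A = \tfrac1{\sqrt k}(\chi*t)$. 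Evaluating $A(n) = \tfrac1{\sqrt k}\sum_m \chi(m)\,t(n-m)$ at $n=0$ and $n=1$ is then short; the only care needed is that, with $k$ odd, $t(-m) = (-1)^{m+1}$ and $t(1-m)=(-1)^m$ on the relevant ranges of $m$. This yields $A(0) = -S/\sqrt k$ and $A(1) = (S+1)/\sqrt k$, where $S := \sum_{m=1}^{k-1}\chi(m)(-1)^m$.

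It remains to evaluate $S$, which is where the class number enters. Splitting by parity and using $\sum_m\chi(m)=0$ gives $S = 2\sum_{m \text{ even}}\chi(m) = 2\chi(2)\sum_{\ell=1}^{(k-1)/2}\chi(\ell)$, and Theorem~\ref{classnumthm} together with $\chi(2)^2=1$ turns this into $S = 2(2\chi(2)-1)h(-k)$. Substituting back,
\[
  \sum_{j=1}^{k-1}\chi(j)\sin^2\left(\frac{\pi j}{k}\right)\tan\left(\frac{\pi j}{k}\right) = \frac{k}{2}\bigl(A(0)-A(1)\bigr) = \sqrt k\left(-\frac12 + (2-4\chi(2))h(-k)\right),
\]
and reading off $\chi(2)=-1$ for $k\equiv3\bmod 8$ and $\chi(2)=1$ for $k\equiv7\bmod8$ gives the two cases. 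I expect the main obstacle to be the sign/parity bookkeeping in the $\chi\tan$ transform (tracking $(-1)^{(\cdot)\bmod k}$ with $k$ odd) and correctly recognizing the even-residue character sum as a class number via Theorem~\ref{classnumthm}; once those are in hand the assembly is immediate. As an independent check, one can bypass Fourier analysis entirely via the identity $\sin^2\theta\tan\theta = \tan\theta - \tfrac12\sin2\theta$, which writes the sum as $\sum_j\chi(j)\tan(\pi j/k)$ (Theorem~\ref{bztan}) minus half of $\sum_j\chi(j)\sin(2\pi j/k)=\sqrt k$, reproducing the same answer.
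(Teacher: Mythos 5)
Your proof is correct and takes essentially the same approach as the paper's: you convolve the $\sin^2$ transform \eqref{sin2transform} with the function $A$ (the paper's $t$) whose transform is $\chi(n)\tan\left(\frac{\pi n}{k}\right)$, reduce everything to the values $A(0)$ and $A(1)$, and your computed values agree exactly with the paper's $t(0)$ and $t(1)$ (which the paper states without derivation, so your convolution-with-$(-1)^m$ computation and the class-number step via Theorem~\ref{classnumthm} supply precisely the omitted details). Incidentally, your closing \emph{independent check} via $\sin^2\theta\tan\theta = \tan\theta - \tfrac12\sin 2\theta$, Theorem~\ref{bztan}, and the corollary to Theorem~\ref{sinecharthm} is itself an even quicker valid proof.
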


\begin{proof}
Let $t(n) := \frac 1 k \sum_{ j=0 }^{ k-1 } \chi(j) \tan \frac{ \pi j }{ k } \, \omega^{ jn }$; we will convolve this function with the discrete Fourier transform of $\sin^2 \left( \frac{ \pi n }{ k } \right)$. Because of the special form of the latter, we only need to compute
\[
  t(0) = - \frac{ 1 }{ \sqrt k } \left( 4 \, \chi(2) - 2 \right) h(-k)
  \qquad \text{ and } \qquad
  t(1) = \frac{ 1 }{ \sqrt k } \left( 1 + \left( 4 \, \chi(2) - 2 \right) h(-k) \right) ,
\]
and the statement now follows with the Convolution Theorem.
\end{proof}

\begin{theorem}\label{chisin2tan2}
Suppose $\chi$ is a nonprincipal, real, primitive, odd character modulo $k$, where $k \ge 7$ is odd. Then
\begin{align*}
  \sum_{ j=1 }^{ k-1 } \chi(j) \sin^2 \left( \frac{ \pi j }{ k } \right) \tan \left( \frac{ 2 \pi j }{ k } \right)
  &= \sqrt k \left( - \frac 1 2 + \left( \chi(2) - 2 \right) h(-k) \right) \\
  &= \begin{cases}
      \sqrt k \left( - \frac 1 2 - 3 \, h(-k) \right) & \text{ if } k \equiv 3 \bmod 8 , \\
      \sqrt k \left( - \frac 1 2 - h(-k) \right) & \text{ if } k \equiv 7 \bmod 8 .
    \end{cases}
\end{align*}
\end{theorem}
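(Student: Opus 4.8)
The plan is to reuse the Fourier--convolution template of Theorems~\ref{chisin2cot} and~\ref{chisin2tan}. Because $\chi$ and $\tan$ are both odd, the function $A(n) := \chi(n)\tan\frac{2\pi n}{k}$ is even (and is genuinely pole-free here, since $k$ odd prevents $\tan\frac{2\pi n}{k}$ from blowing up at integers), as is $B(n) := \sin^2\frac{\pi n}{k}$; so the summand is a product of two even functions and the $j=0$ term vanishes. I would introduce the Fourier coefficients $u(n) := \frac1k\sum_{j=0}^{k-1}\chi(j)\tan\frac{2\pi j}{k}\,\omega^{jn}$ and convolve against the transform of $\sin^2\frac{\pi n}{k}$ recorded in~\eqref{sin2transform}. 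Since that transform is supported only on $n\equiv 0,\pm1\bmod k$, with values $\frac12$ and $-\frac14$, the Convolution Theorem collapses the whole sum to $u(0)$ and $u(1)$; evenness of $A$ gives $u(-1)=u(1)$, so the sum equals $\frac k2\bigl(u(0)-u(1)\bigr)$. Everything reduces to these two coefficients, exactly as the analogous computation for Theorem~\ref{chisin2tan} gave $\frac k2\bigl(t(0)-t(1)\bigr)$.

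To evaluate the $u(n)$ I would first pass to a convolution formula. Expanding $\tan\frac{2\pi n}{k}$ through its discrete Fourier transform~\eqref{tan2transform}, say $\tan\frac{2\pi j}{k}=-i\sum_m R(m)\,\omega^{-jm}$ with $R$ the $\pm1$-valued function of~\eqref{tan2transform}, and applying the Gauss-sum evaluation $\sum_j\chi(j)\,\omega^{j(n-m)}=i\sqrt k\,\chi(n-m)$ for odd primitive real $\chi$, the two factors of $i$ combine to give the clean identity $u(n)=\frac1{\sqrt k}\sum_{m=0}^{k-1}R(m)\,\chi(n-m)$. For $n=0$ this yields $u(0)=\frac1{\sqrt k}\bigl(2\chi(2)-4\bigr)h(-k)$, which I would cross-check independently: writing $m=2j\bmod k$ (a permutation of residues as $k$ is odd) turns $\tan\frac{2\pi j}{k}$ into $\tan\frac{\pi m}{k}$ and $\chi(j)$ into $\chi(2)\chi(m)$, so $u(0)$ becomes $\frac{\chi(2)}{k}$ times the sum of Theorem~\ref{bztan}, and $\chi(2)^2=1$ recovers the same value.

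The main obstacle is $u(1)=\frac1{\sqrt k}\sum_m R(m)\,\chi(1-m)$, a partial character sum weighted by the period-four sign pattern $R$ (the $+,+,-,-$ blocks of~\eqref{tan2transform}). I expect the class-number contributions to cancel here, leaving the clean value $u(1)=\frac1{\sqrt k}$; confirming this is the one genuinely delicate step, and I would carry it out by reindexing $1-m$ against the four-term sign structure of $R$ and collecting the resulting incomplete character sums, watching the $h(-k)$ terms annihilate. Granting $u(1)=\frac1{\sqrt k}$, the coefficients assemble to $\frac k2\bigl(u(0)-u(1)\bigr)=\sqrt k\bigl(-\frac12+(\chi(2)-2)h(-k)\bigr)$, and the stated dichotomy follows immediately from $\chi(2)=-1$ when $k\equiv 3\bmod 8$ and $\chi(2)=1$ when $k\equiv 7\bmod 8$.
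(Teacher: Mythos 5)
Your framework is the paper's own: the paper likewise convolves the transform \eqref{sin2transform} of $\sin^2\left(\frac{\pi n}{k}\right)$ with the Fourier coefficients $s(n) = \frac1k\sum_{j=1}^{k-1}\chi(j)\tan\left(\frac{2\pi j}{k}\right)\omega^{jn}$ (your $u(n)$), reduces the left-hand side to $\frac k2\,s(0)-\frac k4\left(s(1)+s(k-1)\right)$, establishes the same convolution formula $s(n)=\frac1{\sqrt k}\sum_m r(n-m)\chi(m)$ from \eqref{tan2transform} and the Gauss sum, and obtains $s(0)=\frac1{\sqrt k}\left(2\chi(2)-4\right)h(-k)$ just as you do (your cross-check of $u(0)$ against Theorem \ref{bztan} via the substitution $m=2j$ is a nice addition, and your evenness observation $u(k-1)=u(1)$ is sound). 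The gap is that you never evaluate $u(1)$: you say you \emph{expect} $u(1)=\frac1{\sqrt k}$ and that you \emph{would} verify it by reindexing against the sign pattern of $R$. But this coefficient is precisely where the theorem's content lies --- it produces the constant $-\frac12\sqrt k$ and is responsible for no further $h(-k)$ terms appearing --- so a proof that stops at ``I expect the class-number contributions to cancel'' is incomplete at its only nontrivial point.

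The good news is that your anticipated value is correct and the step closes in a few lines; the cancellation is forced by oddness alone, and no class-number terms ever arise. Note first that $R$ is mod-$k$-periodic, its mod-$4$ description applying to the representatives $0\le n<k$, and that $R$ is odd precisely because $k\equiv 3\bmod 4$. By this oddness, $u(1)=\frac1{\sqrt k}\sum_{n=2}^{k-1}\bigl(-R(n-1)\bigr)\chi(n)$, and for $2\le n\le k-1$ one has $R(n-1)=+1$ when $n\equiv 1,2 \bmod 4$ and $R(n-1)=-1$ when $n\equiv 0,3\bmod 4$. Since $k\equiv 3\bmod 4$, the involution $n\mapsto k-n$ of $\{1,\dots,k-1\}$ preserves each of these two residue classes, and $\chi(k-n)=-\chi(n)$, so the sum of $\chi$ over each full class vanishes by pairing; the only surviving contribution is the term $n=1$ excluded from the first class, giving $u(1)=\frac1{\sqrt k}\,\chi(1)=\frac1{\sqrt k}$. (The paper's variant of the same idea: evaluate $s(1)+s(k-1)$ jointly, where all interior terms cancel because $r(m-1)+r(m+1)=0$ for $2\le m\le k-2$, leaving only the boundary terms $r(k-2)\,\chi(k-1)+r(2)\,\chi(1)=-2$.) With $u(1)$ pinned down, your assembly $\frac k2\left(u(0)-u(1)\right)=\sqrt k\left(-\frac12+\left(\chi(2)-2\right)h(-k)\right)$ and the mod $8$ case split are correct.
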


\begin{proof}
We convolve the Fourier transform of $\sin^2 \left( \frac{ \pi n }{ k } \right)$ with $s(n) := \frac 1 k \sum_{ j=1 }^{ k-1 } \chi(j) \tan \left( \frac{ 2 \pi j }{ k } \right)$.
Again we only need to know only a few of the values of $s(n)$, since by \eqref{sin2transform} we obtain
\[
  \frac 1 k \sum_{ j=1 }^{ k-1 } \chi(j) \sin^2 \left( \frac{ \pi j }{ k } \right) \tan \left( \frac{ 2 \pi j }{ k } \right)
  = \frac 1 2 \, s(0) - \frac 1 4 \left( s(1) + s(k-1) \right) .
\]
The value $s(0)$ can be easily derived by a quick convolution calculation:
\[
  s(0) = 
    \frac 1 k \sum_{ j=1 }^{ k-1 } \chi(j) \tan \left( \frac{ 2 \pi j }{ k } \right)
  = \frac 1 {\sqrt k} \left( 2 \, \chi(2) - 4 \right) h(-k)
  = \begin{cases}
      - \frac{ 6 }{ \sqrt k } \, h(-k) & \text{ if } k \equiv 3 \bmod 8 , \\
      - \frac{ 2 }{ \sqrt k } \, h(-k) & \text{ if } k \equiv 7 \bmod 8 .
    \end{cases}
\]
It remains to compute $s(1) + s(k-1)$. Let's denote the right-hand side of \eqref{tan2transform} by
\[
  r(n) := \begin{cases}
       0 & \text{ if } k|n, \\
       1 & \text{ if } n \equiv 0, 1 \bmod k \text{ and } k \nmid n, \\
       -1 & \text{ if } n \equiv 2, 3 \bmod k \text{ and } k \nmid n. \\
     \end{cases}
\]
Thus by the Convolution Theorem, $s(n) = \frac{ 1 }{ \sqrt k } \sum_{ m=0 }^{ k-1 } r(n-m) \, \chi(m)$, whence
\[
  s(1) + s(k-1) = \frac{ 1 }{ \sqrt k } \left( \sum_{ m=0 }^{ k-1 } r(1-m) \, \chi(m) + \sum_{ m=0 }^{ k-1 } r(k-1-m) \, \chi(m) \right) .
\]
Since $\chi$ and $r$ are both odd and $\chi(0) = r(0) = 0$,
\begin{align*}
  s(1) + s(k-1)
  &= - \frac{ 1 }{ \sqrt k } \left( \sum_{ m=2 }^{ k-1 } r(m-1) \, \chi(m) + \sum_{ m=1 }^{ k-2 } r(m+1) \, \chi(m) \right) \\
  &= - \frac{ 1 }{ \sqrt k } \left( r(k-2) \, \chi(k-1) + r(2) \, \chi(1) + \sum_{ m=2 }^{ k-2 } \chi(m) \bigl( r(m-1) + r(m+1) \bigr) \right) \\
  &= \frac{ 2 }{ \sqrt k } \, .
\end{align*}
Here the last equation follows from the fact that for $2 \le m \le k-2$, $r(m-1) + r(m+1) = 0$, as can be directly concluded from the definition of $r$.
\end{proof}




\bibliographystyle{amsplain}

\begin{thebibliography}{10}

\bibitem{berndtzhang}
B.~C. Berndt and Liang~Cheng Zhang, \emph{Ramanujan's identities for
  eta-functions}, Math. Ann. \textbf{292} (1992), no.~3, 561--573.
  \MR{MR1152951 (93g:11039)}

\bibitem{berndtevanswilliams}
Bruce~C. Berndt, Ronald~J. Evans, and Kenneth~S. Williams, \emph{Gauss and
  {J}acobi sums}, Canadian Mathematical Society Series of Monographs and
  Advanced Texts, John Wiley \& Sons Inc., New York, 1998, , A
  Wiley-Interscience Publication. \MR{MR1625181 (99d:11092)}

\bibitem{berndtyeap}
Bruce~C. Berndt and Boon~Pin Yeap, \emph{Explicit evaluations and reciprocity
  theorems for finite trigonometric sums}, Adv. in Appl. Math. \textbf{29}
  (2002), no.~3, 358--385. \MR{2003h:11092}

\bibitem{berndtzaharescu}
Bruce~C. Berndt and Alexandru Zaharescu, \emph{Finite trigonometric sums and
  class numbers}, Math. Ann. \textbf{330} (2004), no.~3, 551--575.
  \MR{MR2099193 (2005f:11173)}

\bibitem{borevichshafarevich}
A.~I. Borevich and I.~R. Shafarevich, \emph{Number theory}, Translated from the
  Russian by Newcomb Greenleaf. Pure and Applied Mathematics, Vol. 20, Academic
  Press, New York, 1966. \MR{MR0195803 (33 \#4001)}

\bibitem{chumarini}
Wenchang Chu and Alberto Marini, \emph{Partial fractions and trigonometric
  identities}, Adv. in Appl. Math. \textbf{23} (1999), no.~2, 115--175.
  \MR{MR1699235 (2000i:33001)}

\bibitem{eisenstein}
G.~Eisenstein, \emph{Aufgaben und {L}ehrs\"atze}, J. Reine Angew. Math.
  \textbf{27} (1844), 281--283.

\bibitem{liueisenstein}
Zhi-Guo Liu, \emph{Some {E}isenstein series identities related to modular
  equations of the seventh order}, Pacific J. Math. \textbf{209} (2003), no.~1,
  103--130. \MR{MR1973936 (2004c:11052)}

\bibitem{stern}
M.~Stern, \emph{\"uber einige {E}igenschaften der {F}unktion {E}x}, J. Reine
  Angew. Math. \textbf{59} (1861), 146--162.

\bibitem{terrasfinitefourier}
Audrey Terras, \emph{Fourier {A}nalysis on {F}inite {G}roups and
  {A}pplications}, London Mathematical Society Student Texts, vol.~43,
  Cambridge University Press, Cambridge, 1999. \MR{2000d:11003}

\bibitem{wangtrig}
Kai Wang, \emph{Exponential sums of {L}erch's zeta functions}, Proc. Amer.
  Math. Soc. \textbf{95} (1985), no.~1, 11--15. \MR{MR796438 (86j:11084)}

\bibitem{williamszhang}
Kenneth~S. Williams and Nan~Yue Zhang, \emph{Evaluation of two trigonometric
  sums}, Math. Slovaca \textbf{44} (1994), no.~5, 575--583, Number theory (Ra\v
  ckova dolina, 1993). \MR{96d:11088}

\end{thebibliography}

\def\cprime{$'$} \def\cprime{$'$}
\providecommand{\bysame}{\leavevmode\hbox to3em{\hrulefill}\thinspace}
\providecommand{\MR}{\relax\ifhmode\unskip\space\fi MR }
\providecommand{\MRhref}[2]{%
  \href{http://www.ams.org/mathscinet-getitem?mr=#1}{#2}
}
\providecommand{\href}[2]{#2}

\setlength{\parskip}{0cm} 
\end{document}